\newtheorem{theorem}{Theorem}[section]
\newtheorem{lemma}[theorem]{Lemma}
\newtheorem{corollary}[theorem]{Corollary}
\newtheorem{proposition}[theorem]{Proposition}
\newtheorem{conjecture}[theorem]{Conjecture}
\theoremstyle{definition}
\newtheorem{example}[theorem]{Example}
\newtheorem{remark}[theorem]{Remark}
\newtheorem{definition}[theorem]{Definition}
\numberwithin{subcase}{case}
\newtheorem*{Ack}{Acknowledgments}
\newsavebox\youngA
\newsavebox\youngB
\newsavebox\youngC
\newsavebox\youngD
\newsavebox\youngE
\newsavebox\youngF
\begin{document}
\title{Orthogonal Determinants of $\mathrm{GL}_n(q)$}
\author{Linda Hoyer\orcidlink{0009-0009-3710-6371}\footnote{linda.hoyer@rwth-aachen.de}}
\date{Lehrstuhl f\"ur Algebra und Zahlentheorie, RWTH Aachen University, Germany}
\maketitle
\begin{abstract}
Let $n$ be a positive integer and $q$ be a power of an odd prime. We provide explicit formulas for calculating the orthogonal determinants $\det(\chi)$, where $\chi \in \mathrm{Irr}(\mathrm{GL}_n(q))$ is an orthogonal character of even degree. Moreover, we show that $\det(\chi)$ is \enquote{odd}. This confirms a special case of a conjecture by Richard Parker. \\
 {\sc Keywords}:  Orthogonal representations, general linear group, Iwahori--Hecke algebra, Gram determinant.
{\sc MSC}: 20C33.
\end{abstract}

\section{Introduction}
Let $G$ be a finite group and let $n$ be a positive integer. A representation
$\rho:G \to \mathrm{GL}_n(K)$ for a field $K \subseteq \mathbb{R}$ is also called an \textit{orthogonal} representation: A summation argument shows that there exists a factoring into an orthogonal group \[  \rho:G \to \mathrm{O}(K^n, \beta) \hookrightarrow \mathrm{GL}_n(K), \] where $\beta$ is a non-degenerate symmetric bilinear form. We say that $\rho$ is \textit{orthogonally stable} if there exists an element $d \in K^{\times}$ such that for all such factorings we have $\det(\beta)=d \cdot (\mathbb{Q}(\chi))^2$. In that case, we denote $\det(\rho)=d \cdot (K^{\times})^2$.

Let $\chi$ be a character of $G$. We say that $\chi$ is orthogonal (resp. orthogonally stable) if and only if it is afforded by an orthogonal (resp. orthogonally stable) representation. In \cite{NebeParkerOrtStab} it was shown that for an orthogonally stable character $\chi$, there exists an element $d \in \mathbb{Q}(\chi)^{\times}$ such that for any orthogonally stable representation $\rho:G \to \mathrm{GL}_n(K)$ affording $\chi$ for a field $K/\mathbb{Q}(\chi)$, we have $\det(\rho)=d \cdot (K^{\times})^2$. The square class $\det(\chi):=d \cdot (\mathbb{Q}(\chi)^{\times})^2$ is then called the \textit{orthogonal determinant} of $\chi$.

An important class of orthogonally stable characters is given by the set \[ \mathrm{Irr}^+(G):= \{\chi \in \mathrm{Irr}(G) \mid \chi \ \text{orthogonal}, \ \chi(1) \in 2 \mathbb{Z} \}. \] Indeed, given the character table of $G$ and the orthogonal determinants of the $\mathrm{Irr}^+(G)$-characters, the orthogonal determinant of any orthogonally stable character of $G$ can be calculated. 

The concept of bilinear forms that are invariant under the group action can be generalized to other symmetric algebras with involutions, see also \cite{GeckMonomial}. In the context of this paper, we call these \textit{monomial algebras}. For a monomial algebra $\mathcal{H}$, we may also speak of the $\mathrm{Irr}^+(\mathbb{C}\mathcal{H})$-characters and the orthogonal determinants thereof. An important application are the orthogonal determinants of \textit{Iwahori--Hecke algebras}, which have already been considered in types $A_n$ and $B_n$, see \cite{DeterminantsIwahoriHecke} and \cite{DeterminantsTypeBn}.

A recent long term project aims to calculate the orthogonal determinants of the $\mathrm{Irr}^+(G)$-characters of the finite simple groups, see \cite{OrthAtlas}. The biggest class of the finite simple groups consists of the finite groups of Lie type. Prominent examples of the (not necessarily simple) finite groups of Lie type are given by the groups $\mathrm{GL}_n(q)$, $\mathrm{SL}_n(q)$, $\mathrm{SU}_n(q)$ and $\mathrm{Sp}_{2n}(q)$ for $n$ a positive integer and $q$ a prime power. 

The orthogonal determinants of the groups $\mathrm{SL}_2(q)$, $\mathrm{SL}_3(q)$ and $\mathrm{SU}_3(q)$ have been fully determined, see \cite{BraunNebeSL2} and \cite{HoyerNebe}. The goal of this paper is to extend the results to the groups $\mathrm{GL}_n(q)$ for $q$ odd.

Let $G=\mathrm{GL}_n(q)$ for $n$ a positive integer and $q=p^r$ a power of an odd prime $p$. Let $B$ be a Borel subgroup. An irreducible character of $G$ is called \textit{unipotent} if its restriction to $B$ contains the trivial character. Let now $\chi \in \mathrm{Irr}^+(G)$. There are three possible cases:
\begin{enumerate}[label=(\roman*)]
    \item $\chi$ is \textit{Borel-stable}, i.e., $\mathrm{Res}^G_B(\chi)$ is orthogonally stable.
    \item $\chi$ is unipotent.
    \item The restriction $\mathrm{Res}^G_B(\chi)$ contains a linear character $\theta$ with values strictly contained in $\{-1,1\}$.
\end{enumerate}
 The structure of $B$ as a semidirect product of an abelian group $T$ and a $p$-subgroup $U$ allows for an easy calculation of $\det(\chi)$ for $\chi$ Borel-stable. The final case can be reduced to the case of $\chi$ being unipotent. 
 
 This leaves us with the unipotent characters. Let $\chi \in \mathrm{Irr}^+(G)$ be unipotent; there is a corresponding character $\chi_{\mathcal{H}_q} \in \mathrm{Irr}^+(\mathcal{H}_q)$ of the Iwahori--Hecke algebra $\mathcal{H}_q$ associated to $G$. Condensation techniques now make it possible to reduce the computation of $\det(\chi)$ to that of $\det(\chi_{\mathcal{H}_q})$. Since the orthogonal determinants of the characters of the Iwahori--Hecke algebras of type $A_{n-1}$ are explicitly known, the orthogonal determinants of the characters of $G$ can be fully determined.

The relationship between the orthogonal determinants of the characters of $G$ and the Iwahori--Hecke algebras of type $A_{n-1}$ has further consequences: We show that $\det(\chi)$ is \textit{odd} for all orthogonally stable characters $\chi$ of $G$ if and only if the same holds for the symmetric group $\mathfrak{S}_n$. Since the latter was shown to hold in \cite{HoyerSymmetricParker}, this confirms a conjecture made by Richard Parker in the case of the general linear groups.

The results of this paper have already appeared in this author's PhD thesis \cite{HoyerThesis}.

\begin{Ack}
This paper is a contribution to Project-ID 286237555 – TRR 195 – by the
Deutsche Forschungsgemeinschaft (DFG, German Research Foundation).
\end{Ack}

\section{Monomial algebras}
In this section, we introduce \textit{monomial algebras}, that is, symmetric algebras with a \enquote{nice} (anti)involution. Monomial algebras have already appeared in \cite{GeckMonomial} where Frobenius--Schur indicators of these algebras were introduced, although we do not assume our algebras to be split. Important examples of monomial algebras include group algebras and Iwahori--Hecke algebras in characteristic $0$. The main motivation to consider these algebras is to have a common language for the \textit{orthogonal determinants} appearing throughout this paper.

Let $\mathcal{H}$ be a finite-dimensional associative semi-simple $\mathbb{Q}$-algebra. In particular, the field $\mathbb{C}$ is a splitting field of $\mathcal{H}$; we denote $K\mathcal{H}:=K \otimes_{\mathbb{Q}} \mathcal{H}$ for a subfield $K$ of the complex numbers. We assume all of the modules considered to be finite-dimensional left modules. For $K\subseteq \mathbb{C}$ a field, any $K\mathcal{H}$-module $V$ gives rise to a representation $\rho:K\mathcal{H} \to \mathrm{End}(V)$. The \textit{character} of $V$ is defined by \[ \chi_V: K\mathcal{H} \to K, \ \chi_V(h):=\mathrm{trace}(\rho(h)), \ h \in K\mathcal{H}. \] The set of \textit{irreducible characters} is given by \[ \mathrm{Irr}(\mathbb{C}\mathcal{H}):= \{\chi_V \mid V \ \text{simple} \  \mathbb{C}\mathcal{H} \text{-module}  \}.  \]
For $\chi=\chi_V$ a character of $K\mathcal{H}$, we set $\deg(\chi):=\dim(V)$ to be the \textit{degree} of $\chi$.

\begin{definition}
An involution on $\mathcal{H}$ is a $\mathbb{Q}$-linear map $\dagger: H \to H$ such that $(h^{\dagger})^{\dagger}=h$ for all $h \in \mathcal{H}$ and $(hh')^{\dagger}=(h')^{\dagger}h^{\dagger}$ for all $h,h' \in \mathcal{H}$.
\end{definition}
In other words, an involution is an isomorphism to its opposite algebra which is of order $2$.
  \begin{definition}
A trace function on $\mathcal{H}$ is a $\mathbb{Q}$-linear map $\tau:\mathcal{H} \to \mathbb{Q}$ such that $\tau(hh')=\tau(h'h)$ for all $h, h' \in \mathcal{H}$. We say that the pair $(\mathcal{H},\tau)$ is a symmetric algebra if the symmetric bilinear form \[ \mathcal{H} \times \mathcal{H} \to \mathbb{Q}, (h,h') \mapsto \tau(hh') \] is non-degenerate. We then call $\tau$ a symmetrizing trace for $\mathcal{H}$.
    \end{definition}
For more information about symmetric algebra, we refer to \cite{GeckPfeifferCoxeterIwahori}.
    \begin{definition}
    \label{DefinitionMonomial}
      A symmetric algebra $(\mathcal{H},\tau)$ is called a monomial algebra if the following are satisfied:
            \begin{enumerate}[label=(\roman*)]
                \item There is an involution $\dagger: \mathcal{H} \to \mathcal{H}$ such that $\tau(h^{\dagger})=\tau(h)$ for all $h \in \mathcal{H}$.
                \item There is a finite set $W$ and a $\mathbb{Q}$-basis $\{b_w \mid w \in W\}$ of $\mathcal{H}$ indexed by $W$ such that there is a bijective map $\iota: W \to W$ with $b_w^{\dagger}=b_{\iota(w)}$ and $$\tau(b_w b_{w'}^{\dagger})= 
                \begin{cases}
                    a_w & \text{if} \ \iota(w)=w', \\
                    0 & \text{if} \ \iota(w) \neq w',
                \end{cases} $$ where $a_w=a_{\iota(w)} \in \mathbb{Q}^{\times}$.
            \end{enumerate}
    \end{definition}
    \begin{example}
    \label{ExampleGroupAlgebraMonomial}
Let $G$ be a finite group. We claim that the group algebra $\mathcal{H}:=\mathbb{Q}G$ has a natural structure as a monomial algebra. We let $\tau:\mathcal{H} \to \mathbb{Q}$ be such that \[ \tau(g):= \begin{cases}
1, & \ \text{if} \ g=1, \\
0, & \ \text{if} \ g \neq 1.
\end{cases} \] for $g \in G$. The set $G$ is a canonical choice for a basis of $\mathcal{H}$, we further set $\iota(g)=g^{\dagger}:=g^{-1}$. We have $a_g=1$ for all $g \in G$. The set of irreducible characters of $\mathbb{C}G$ will be denoted by $\mathrm{Irr}(G)$.     
    \end{example}

Assume from now on that $\mathcal{H}$ is a monomial algebra.
\begin{definition}
 Let $\chi$ be a character of $\mathbb{C}\mathcal{H}$. We set \[ \mathbb{Q}(\chi)=\mathbb{Q}( \{ \chi(b_w) \mid w \in W \}  ) \] to be the \textit{character field} of $\chi$.
\end{definition}

\begin{definition}
Let $K \subseteq \mathbb{R}$ be a field, $V$ be a $K\mathcal{H}$-module and $\beta: V \times V \to K$ be a non-degenerate symmetric bilinear form on $V$. We call the pair $(V, \beta)$ an orthogonal $K\mathcal{H}$-module, if \[ \beta(hv,v')=\beta(v, h^{\dagger} v') \] for all $v, v' \in V$, $h \in H$. Furthermore, the character $\chi_V$ is called an orthogonal character. We define \[ \mathrm{Irr}^+(\mathbb{C}\mathcal{H}):=\{\chi \in  \mathrm{Irr}(\mathbb{C}\mathcal{H}) \mid \chi \ \text{orthogonal}, \ \mathrm{deg}(\chi) \in 2 \mathbb{Z} \}. \] 
\end{definition}

For $\mathcal{H}=\mathbb{Q}G$ a group algebra of a finite group, we may write $\mathrm{Irr}^+(G)$ instead of $\mathrm{Irr}^+(\mathbb{C}G)$.
\begin{example}
   Note that $(\mathcal{H},\beta_{\tau})$ with the bilinear form $\beta_{\tau}(h,h'):=\tau( h  (h')^{\dagger})$ for $h, h' \in \mathcal{H}$ is itself an orthogonal $\mathcal{H}$-module. There is a canonical orthogonal basis given by the standard basis $\{b_w \mid w \in W\}$.
\end{example}

\subsection{Orthogonal determinants of monomial algebras}
In \cite{NebeOrtDet}, the orthogonal determinants of the characters in $\mathrm{Irr}^+(\mathbb{C}G)$ for $G$ a finite group were introduced. We will state the main results appearing in loc. cit., which apply with virtually no changes to our more general situation.

Let $V$ be a finite-dimensional $K$-vector space for a field  $K \subseteq \mathbb{R}$ and let $\beta$ be a symmetric non-degenerate bilinear form on $V$. Recall that the \textit{determinant} of $\beta$ is the square class of the determinant of the Gram matrix with respect to an arbitrary basis $\{v_i \mid 1 \leq i \leq n \}$ of $V$, i.e., \[ \det(\beta):=\det(\beta(v_i, v_j)_{1 \leq i,j \leq n}) \cdot (K^{\times})^2.\]

\begin{theorem}
\label{MainThmOrthDetMonomial}
Let $\mathcal{H}$ be a monomial algebra with involution $\dagger:\mathcal{H} \to \mathcal{H}$. Let $\chi \in \mathrm{Irr}^+(\mathbb{C}\mathcal{H})$ be of degree $n$.
\begin{enumerate}[label=(\roman*)]
    \item There is a unique square class $\det(\chi):=d \cdot (\mathbb{Q}(\chi)^{\times})^2 \in \mathbb{Q}(\chi)^{\times}/(\mathbb{Q}(\chi)^{\times})^2$ such that for any orthogonal $K\mathcal{H}$-module $(V,\beta)$ affording $\chi$ for any field $K \subseteq \mathbb{R}$, we have that \[ \det(\beta)=d \cdot (K^{\times})^2. \] We call $\det(\chi)$ the orthogonal determinant of $\chi$.    
    \item Let $\rho:K\mathcal{H} \to K^{n \times n}$ for a field $K \subseteq \mathbb{R}$ be a representation affording the character $\chi$. There exists an element $h \in \mathcal{H}$ with $h^{\dagger}=-h$ and $\det(\rho(h)) \neq 0$. For any such element $h$, we have that \[ \det(\chi)=\det(\rho(h)) \cdot (\mathbb{Q}(\chi)^{\times})^2. \]
\end{enumerate}
\end{theorem}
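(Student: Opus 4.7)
The plan is to prove (ii) first and deduce (i) from it; the engine is a ``skew twist'' of $\beta$ by an anti-self-adjoint element, whose Gram determinant turns out to be a square by virtue of the Pfaffian. More precisely, let $(V, \beta)$ be an orthogonal $K\mathcal{H}$-module affording $\chi$, let $h \in \mathcal{H}$ satisfy $h^\dagger = -h$ and $\det(\rho(h)) \neq 0$, and define $B_h(v, v') := \beta(v, hv')$. Using symmetry of $\beta$, the invariance $\beta(hv, v') = \beta(v, h^\dagger v')$, and $h^\dagger = -h$, a brief calculation yields $B_h(v', v) = -B_h(v, v')$, so $B_h$ is alternating. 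Fixing a basis of $V$, its Gram matrix equals the product of the Gram matrix of $\beta$ with the matrix of $\rho(h)$, so $\det(B_h) = \det(\beta) \cdot \det(\rho(h))$. Since $n := \deg(\chi)$ is even and $B_h$ is non-degenerate, $\det(B_h) = \mathrm{Pf}(B_h)^2 \in (K^\times)^2$, whence
\[ \det(\beta) \equiv \det(\rho(h)) \pmod{(K^\times)^2}. \]

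To produce such an $h$, Burnside's density theorem gives that $\rho_{\mathbb{C}}: \mathbb{C}\mathcal{H} \to \mathrm{End}_{\mathbb{C}}(V_{\mathbb{C}})$ is surjective; invariance of $\beta_{\mathbb{C}}$ makes $\rho_{\mathbb{C}}$ intertwine $\dagger$ with the $\beta_{\mathbb{C}}$-adjoint, so $\rho_{\mathbb{C}}$ maps the antisymmetric subspace $(\mathbb{C}\mathcal{H})^-$ surjectively onto the space of $\beta_{\mathbb{C}}$-skew endomorphisms of $V_{\mathbb{C}}$. Since $n$ is even, this space contains invertible elements (e.g.\ a block-diagonal symplectic matrix), so the polynomial function $h \mapsto \det(\rho(h))$ on the $\mathbb{Q}$-space $\mathcal{H}^- := \{h \in \mathcal{H} : h^\dagger = -h\}$ is not identically zero and hence attains a nonzero value at some rational point (as $\mathbb{Q}$ is infinite). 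Finally, Newton's identities express $\det(\rho(h))$ as a polynomial in the character values $\chi(h), \chi(h^2), \ldots, \chi(h^n)$, forcing $\det(\rho(h)) \in \mathbb{Q}(\chi)^\times$; this completes (ii).

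For (i), the above exhibits a candidate $d := \det(\rho(h)) \in \mathbb{Q}(\chi)^\times$ that represents $\det(\beta)$ in $K^\times/(K^\times)^2$ for every orthogonal $K\mathcal{H}$-model of $\chi$. The step I expect to be the most delicate is the \emph{descent}: proving the resulting square class is well-defined in $\mathbb{Q}(\chi)^\times/(\mathbb{Q}(\chi)^\times)^2$, not merely in the potentially larger group $K^\times/(K^\times)^2$. To handle this I would produce an orthogonal $\mathbb{Q}(\chi)\mathcal{H}$-module affording $\chi$ and then apply the formula of (ii) with $K = \mathbb{Q}(\chi)$; this forces $\det(\rho(h_1))/\det(\rho(h_2)) \in (\mathbb{Q}(\chi)^\times)^2$ for any two valid $h_1, h_2$, yielding both existence and uniqueness of the square class $\det(\chi)$. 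Such a $\mathbb{Q}(\chi)$-model exists because $\chi$ orthogonal forces Frobenius--Schur indicator $+1$, hence Schur index $1$ over $\mathbb{R}$; a descent of the representation to $\mathbb{Q}(\chi)$, combined with the one-dimensionality of the space of invariant symmetric bilinear forms on an absolutely simple module (supplying a $\mathbb{Q}(\chi)$-rational non-degenerate invariant form, unique up to scalar), delivers the required $\mathbb{Q}(\chi)$-form. These arguments mirror the group-algebra case of \cite{NebeOrtDet} and, as the paper notes, transfer to the monomial-algebra setting essentially verbatim.
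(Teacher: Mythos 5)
The paper does not actually prove Theorem \ref{MainThmOrthDetMonomial}: it states the result and defers to \cite{NebeOrtDet}, asserting that the arguments there ``apply with virtually no changes.'' Your proposal reconstructs precisely that standard argument, and the core of it is correct: the twisted form $B_h(v,v')=\beta(v,hv')$ is alternating because $h^\dagger=-h$, its Gram matrix is the product of the Gram matrix of $\beta$ with the matrix of $\rho(h)$, and the Pfaffian kills $\det(B_h)$ modulo squares, giving $\det(\beta)\equiv\det(\rho(h))\bmod (K^\times)^2$. The existence of $h$ via surjectivity of $\rho_{\mathbb{C}}$ onto $\mathrm{End}(V_{\mathbb{C}})$, the identification of the image of $\mathcal{H}^-$ with the $\beta$-skew endomorphisms (which contain invertible elements exactly because $n$ is even), and Zariski density of the $\mathbb{Q}$-points are all fine, as is the observation that Newton's identities force $\det(\rho(h))\in\mathbb{Q}(\chi)$.

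The genuine gap is in your descent step for (i). You claim a $\mathbb{Q}(\chi)$-rational orthogonal module exists because ``$\chi$ orthogonal forces Frobenius--Schur indicator $+1$, hence Schur index $1$ over $\mathbb{R}$; a descent of the representation to $\mathbb{Q}(\chi)$\dots'' This is a non sequitur: Schur index $1$ at the real place does not give Schur index $1$ over $\mathbb{Q}(\chi)$. The global Schur index is the least common multiple of the local indices, and there are characters of finite groups with indicator $+1$ whose Schur index over their character field equals $2$ because of obstructions at finite primes; for such $\chi$ no $\mathbb{Q}(\chi)G$-module affording $\chi$ exists, so your proposed uniqueness argument has nothing to restrict to. Note also that without a $\mathbb{Q}(\chi)$-model the uniqueness is not automatic from (ii): knowing $d/d'\in(K^\times)^2$ for the fields $K$ that do carry models does not by itself yield $d/d'\in(\mathbb{Q}(\chi)^\times)^2$ unless one controls which fields occur (e.g.\ an odd-degree extension, or two independent quadratic splitting fields). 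This is exactly the point where the proofs in \cite{NebeOrtDet} and \cite{NebeParkerOrtStab} do real work, analysing the simple component of $\mathbb{Q}(\chi)\mathcal{H}$ as a matrix ring over a (possibly quaternion) division algebra with involution; your proof needs that analysis, or at least a correct argument producing enough splitting fields, to close part (i). Everything else in the proposal matches the cited source.
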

\begin{remark}
Let $G$ be a finite group and let $\mathcal{H}=\mathbb{Q}G$. In \cite{NebeParkerOrtStab}, it is shown that Theorem \ref{MainThmOrthDetMonomial} still holds if we only assume $\chi$ to be orthogonally stable.   
\end{remark}

\section{Orthogonal determinants of finite groups}

Let $G$ be a finite group. The orthogonal determinants of $G$ are special in two regards. On the one hand, orthogonal determinants are also defined for more general orthogonally stable characters, in contrast to solely the $\mathrm{Irr}^+(\mathbb{C}\mathcal{H})$-characters for $\mathcal{H}$ a monomial algebra. On the other hand, we may use techniques from the representation theory of finite groups, such as restriction and induction from subgroups, to calculate the orthogonal determinants of $G$.

\begin{remark}(see \cite[Proposition 5.7]{NebeParkerOrtStab})
\label{DetOrthogonallySimple}
Let $\rho$ be an irreducible orthogonally stable representation of $G$ and let $\chi$ be the character of $\rho$. There are three cases to consider.
\begin{enumerate}[label=(\roman*)]
\item $\chi=\psi+ \overline{\psi}$, where $\psi \in \mathrm{Irr}(G)$ has Frobenius--Schur indicator $0$, i.e., $\psi$ achieves non-real values. Then $\det(\chi)=\delta^{\psi(1)} \cdot (\mathbb{Q}(\chi))^2$, where $\delta \in \mathbb{Q}(\chi)$ is chosen such that \[ \mathbb{Q}(\chi)[\sqrt{-\delta}]=\mathbb{Q}(\psi) .\]
    \item $\chi=2\psi$, where $\psi \in \mathrm{Irr}(G)$ has Frobenius--Schur indicator $-1$, i.e., $\psi$ has real values but is not afforded by a real representation. Then $\det(\chi)=1 \cdot (\mathbb{Q}(\chi))^2$.
    \item $\chi \in \mathrm{Irr}^+(G)$. 
\end{enumerate}
\end{remark}
If $\chi$ is the sum of orthogonally stable characters, then $\det(\chi)$ can be deduced from the orthogonal determinants of its orthogonally simple components, see also \cite[Proposition 5.17]{NebeParkerOrtStab}. These considerations lead to a complete answer for the orthogonal determinants of some specific classes of finite groups.
\begin{remark}
\label{Abelian Group p-Group Determinant}
 \begin{enumerate}[label=(\roman*)]
\item Let $G$ be an abelian group and let $\chi=\sum_{i=1}^m \theta_i$ be an orthogonally stable character of $G$, with $\theta_i \in \mathrm{Irr}(G)$ for all $i$. Recall that the irreducible characters of abelian groups are given by homomorphisms $\theta_i:G \to \mathbb{C}^{\times}$. We conclude that $\chi$ is orthogonally stable precisely when the Frobenius--Schur indicator of each of the $\theta_i$ is equal to $0$ and each of the irreducible components comes with a complex conjugate partner, i.e., $\chi=\sum_{i=1}^{1/2 m} (\theta'_i+\overline{\theta'_i})$ for some $\theta'_i \in \mathrm{Irr}(G)$. The beforementioned formulas now allow to calculate $\det(\chi)$ in all cases.
    \item Let $p$ be an odd prime and assume $G$ is a $p$-group. Let $\chi$ be an orthogonally stable character of $G$. In \cite[Theorem 4.3]{NebeOrthDisc} an explicit formula for $\det(\chi)$ is given. We only state the formula in a special case:
    
    Assume further that $\mathbb{Q}(\chi)=\mathbb{Q}$. Let $q=p^r$ for some positive integer $r$ such that $q-1 \mid \chi(1)$. Then \[ \det(\chi)=q^{\chi(1)/(q-1)} \cdot (\mathbb{Q}^{\times})^2. \]
\end{enumerate}   
\end{remark}

The following two statements address the relationship between the orthogonal determinants of $G$ and those of its subgroups. We fix a subgroup $H$ of $G$.
\begin{lemma}
\label{OrthDetRestriction}
Let $\chi$ be an orthogonally stable character of $G$. Assume that $\mathrm{Res}^G_H(\chi)$ is an orthogonally stable character of $H$. Then $\det(\chi)=\det(\mathrm{Res}^G_H(\chi)) \cdot (\mathbb{Q}(\chi)^{\times})^2$. 
\end{lemma}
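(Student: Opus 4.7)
The plan is to verify the lemma by exhibiting a single invariant bilinear form that witnesses both orthogonal determinants and then appealing to the uniqueness clause of Theorem \ref{MainThmOrthDetMonomial}(i).

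First, I would fix a field $K \subseteq \mathbb{R}$ with $\mathbb{Q}(\chi) \subseteq K$ together with an orthogonally stable representation $\rho \colon G \to \mathrm{GL}(V)$ over $K$ affording $\chi$, and an invariant non-degenerate symmetric bilinear form $\beta$ on $V$, so that $\rho$ factors through $\mathrm{O}(V, \beta)$. Because $H \subseteq G$, the form $\beta$ is automatically invariant under the restricted action $\mathrm{Res}^G_H(\rho)$, so the very same pair $(V, \beta)$, now viewed as a $KH$-module, is an orthogonal $KH$-module affording $\mathrm{Res}^G_H(\chi)$. Its Gram determinant $\det(\beta) \in K^\times/(K^\times)^2$ is literally the same quantity, independent of whether we regard $V$ as a $G$- or an $H$-module.

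Applying Theorem \ref{MainThmOrthDetMonomial}(i) (in its orthogonally stable extension for group algebras noted in the remark following the theorem) to both situations then yields
\[
\det(\beta) = \det(\chi) \cdot (K^\times)^2 \quad \text{and} \quad \det(\beta) = \det(\mathrm{Res}^G_H(\chi)) \cdot (K^\times)^2.
\]
Combining the two equations shows that the image of $\det(\mathrm{Res}^G_H(\chi))$ in $\mathbb{Q}(\chi)^\times/(\mathbb{Q}(\chi)^\times)^2$, via the inclusion of character fields $\mathbb{Q}(\mathrm{Res}^G_H(\chi)) \subseteq \mathbb{Q}(\chi)$, also satisfies the defining property of $\det(\chi)$ stipulated by Theorem \ref{MainThmOrthDetMonomial}(i). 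The uniqueness assertion there then forces the equality $\det(\chi) = \det(\mathrm{Res}^G_H(\chi)) \cdot (\mathbb{Q}(\chi)^\times)^2$.

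The only genuinely delicate point, rather than a serious obstacle, is the bookkeeping between the three fields $\mathbb{Q}(\mathrm{Res}^G_H(\chi)) \subseteq \mathbb{Q}(\chi) \subseteq K$: one must confirm that an equality of square classes witnessed over the large field $K$ descends correctly to the small field $\mathbb{Q}(\chi)$ in which the lemma is stated, and this descent is supplied precisely by the uniqueness portion of Theorem \ref{MainThmOrthDetMonomial}(i). Once that observation is recorded, the proof reduces to the tautology that the Gram determinant of a bilinear space does not know which group is acting on it.
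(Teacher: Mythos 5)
Your argument is correct and is exactly the one the paper implicitly relies on: the lemma is stated there without proof, the point being precisely that the invariant form $\beta$ on a module affording $\chi$ is also an invariant form for the restricted $H$-action, so one Gram determinant witnesses both square classes. Your closing remark handles the only real subtlety correctly — since the identification holds for \emph{every} orthogonally stable representation over \emph{every} $K \supseteq \mathbb{Q}(\chi)$, the representative of $\det(\mathrm{Res}^G_H(\chi))$ satisfies the defining property of $\det(\chi)$, and the uniqueness clause of Theorem~\ref{MainThmOrthDetMonomial}(i) (in its orthogonally stable form from \cite{NebeParkerOrtStab}) gives the descent to $\mathbb{Q}(\chi)^{\times}/(\mathbb{Q}(\chi)^{\times})^2$.
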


\begin{lemma}
 \label{Determinant Induced Character}
Let $\psi$ be an orthogonal character of $H$ such that $\chi=\mathrm{Ind}^G_H(\psi)$ is orthogonally stable with $\mathbb{Q}(\psi)=\mathbb{Q}(\chi)$. Then 
\[ \det(\chi)=\begin{cases} 1 \cdot (\mathbb{Q}(\chi)^{\times})^2,  &\ \text{if} \ [G:H] \ \text{even}, \\
                        \det(\psi),  &\ \text{if} \ [G:H] \ \text{odd}.
\end{cases} \] 
\end{lemma}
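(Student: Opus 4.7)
The plan is to construct an orthogonal realization of $\chi$ directly from one for $\psi$ by the standard induction of bilinear forms, and then read off the determinant from its block‐diagonal Gram matrix. Fix a field $K \subseteq \mathbb{R}$ containing $\mathbb{Q}(\chi) = \mathbb{Q}(\psi)$ and an orthogonal $KH$-module $(V,\beta)$ affording $\psi$. Set $m := [G:H]$, choose coset representatives $g_1,\dots,g_m$ for $G/H$, and write
\[ \mathrm{Ind}^G_H(V) = \bigoplus_{i=1}^m g_i \otimes V \]
as a $K$-vector space. Define a symmetric bilinear form $\beta^G$ on the induced module by
\[ \beta^G(g_i \otimes v,\ g_j \otimes v') := \delta_{ij}\cdot \beta(v,v') \qquad (v,v' \in V), \]
extended bilinearly.

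The first step is to verify that $(\mathrm{Ind}^G_H(V),\beta^G)$ is an orthogonal $KG$-module affording $\chi$. Symmetry and non-degeneracy are clear, since with respect to any basis of the form $\{g_i \otimes e_k\}$ coming from a basis $\{e_k\}$ of $V$ the Gram matrix of $\beta^G$ is block-diagonal with $m$ identical blocks equal to the Gram matrix of $\beta$. For $G$-invariance, given $g \in G$ and $1 \le i \le m$, write $g g_i = g_{\sigma(i)} h_i$ for a unique permutation $\sigma \in \mathfrak{S}_m$ and $h_i \in H$; then $g\cdot(g_i \otimes v) = g_{\sigma(i)} \otimes h_i v$, so
\[ \beta^G\bigl(g\cdot(g_i\otimes v),\, g\cdot(g_j\otimes v')\bigr) = \delta_{\sigma(i),\sigma(j)}\,\beta(h_i v,\,h_j v'), \]
which equals $\delta_{ij}\beta(v,v')$ because $\sigma$ is a bijection and the $H$-invariance of $\beta$ gives $\beta(h_i v, h_i v') = \beta(v,v')$. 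That the character of $\mathrm{Ind}^G_H(V)$ is $\chi$ is standard.

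The key computation is now immediate: the block-diagonal shape of the Gram matrix gives $\det(\beta^G) = \det(\beta)^m$ in $K^\times$. Since $\chi$ is orthogonally stable and $\mathbb{Q}(\psi)=\mathbb{Q}(\chi)$, Theorem~\ref{MainThmOrthDetMonomial}(i) applies and yields
\[ \det(\chi) = \det(\beta)^m \cdot (\mathbb{Q}(\chi)^\times)^2. \]
If $m$ is even, then $\det(\beta)^m = (\det(\beta)^{m/2})^2$ is a square in $\mathbb{Q}(\chi)^\times$, so $\det(\chi) = 1 \cdot (\mathbb{Q}(\chi)^\times)^2$. If $m$ is odd, then $\det(\beta)^m \equiv \det(\beta)\pmod{(\mathbb{Q}(\chi)^\times)^2}$; applying the identity above to two different orthogonal realizations $(V_1,\beta_1)$, $(V_2,\beta_2)$ of $\psi$ forces $\det(\beta_1) \equiv \det(\beta_2)\pmod{(\mathbb{Q}(\chi)^\times)^2}$, so that $\det(\psi) := \det(\beta)\cdot(\mathbb{Q}(\chi)^\times)^2$ is itself well-defined in this case and agrees with $\det(\chi)$.

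I do not anticipate a substantial obstacle: the only non-bookkeeping step is the check that the inherited form $\beta^G$ is $G$-invariant rather than just $H$-invariant on each coset summand, and this is the routine coset-permutation computation sketched above. The conclusion in the odd case comes with the minor bonus that $\det(\psi)$ is forced to be well-defined, even though $\psi$ itself was only assumed to be orthogonal.
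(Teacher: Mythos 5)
Your proposal is correct and follows essentially the same route as the paper: induce the bilinear form diagonally across the coset summands, observe the Gram matrix is block-diagonal with $[G:H]$ copies of the Gram matrix of $\beta$, and read off $\det(\beta_W)=\det(\beta)^{[G:H]}$. You simply fill in the details the paper leaves as a verification (the $G$-invariance check via coset permutation and the well-definedness of $\det(\psi)$ in the odd-index case).
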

\begin{proof}
Let $(V,\beta)$ be an orthogonal $KH$-module affording the character $\psi$ for a field $K/\mathbb{Q}(\chi)$. Let $x_1, \dots, x_N$ be representatives of the cosets $G/H$ and let \[ W=\bigoplus_{i=1}^n x_iV \] be the induced representation. Define the bilinear form $\beta_W$ on $W$ by \[ \beta_W(x_i v, x_jv')=\delta_{ij} \beta(v,v'), \ v,v' \in V \] where $\delta_{ij}$ depicts the Kronecker delta. This makes $(W, \beta_W)$ into an orthogonal $KG$-module. We verify that $\det(\beta_W)=\det(\beta)^{N}$ from which the statement follows. 
\end{proof}    
Recall that if $G_1, G_2$ are finite groups, then \[  \mathrm{Irr}(G_1 \times G_2)= \{\chi_1 \boxtimes \chi_2 \mid \chi_1 \in \mathrm{Irr}(G_1), \chi_2 \in \mathrm{Irr}(G_2) \}, \] where $(\chi_1 \boxtimes \chi_2)(g_1,g_2):=\chi_1(g_1)\chi_2(g_2)$ for $g_1 \in G_1$, $g_2 \in G_2$ is the outer product of the characters. 
    \begin{lemma}
    \label{Determinants Direct Product Groups}
    Let $G_1, G_2$ be finite groups, $\chi_1 \in \mathrm{Irr}(G_1), \chi_2 \in \mathrm{Irr}(G_2) $. Let $G=G_1 \times G_2$ and $\chi=\chi_1 \boxtimes \chi_2$. Then $\chi \in \mathrm{Irr}^+(G)$ if and only if one of the following holds:
     \begin{enumerate}[label=(\roman*)]
     \item We have $\iota(\chi_1)=\iota(\chi_2)=1$ and at least one of the characters has even degree. Assume without loss of generality that $\chi_1(1)$ is even. Then $$\det(\chi) =\det(\chi_1)^{\chi_2(1)} \cdot (\mathbb{Q}(\chi)^{\times})^2.$$
     \item We have $\iota(\chi_1)=\iota(\chi_2)=-1$ and at least one of the characters has even degree. Then $$\det(\chi) =1\cdot (\mathbb{Q}(\chi)^{\times})^2.$$
     \end{enumerate}
    \end{lemma}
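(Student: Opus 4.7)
The plan is to first classify when $\chi = \chi_1 \boxtimes \chi_2$ lies in $\mathrm{Irr}^+(G)$, and then to compute $\det(\chi)$ by applying Theorem~\ref{MainThmOrthDetMonomial}(ii) to the antisymmetric element $h := h_1 \otimes 1 \in \mathbb{Q}G_1 \otimes \mathbb{Q}G_2 = \mathbb{Q}G$ for a carefully chosen $h_1 \in \mathbb{Q}G_1$.

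For the classification, I would use multiplicativity of the Frobenius--Schur indicator under direct products of groups, which yields $\iota(\chi) = \iota(\chi_1)\iota(\chi_2)$. Hence $\chi$ is orthogonal precisely when $(\iota(\chi_1), \iota(\chi_2)) \in \{(1,1), (-1,-1)\}$, and $\chi(1) = \chi_1(1)\chi_2(1)$ is even iff at least one factor is, yielding the stated case distinction. Note that in case (ii), the existence of a nondegenerate invariant symplectic form on each $V_i$ forces $\chi_i(1) \in 2\mathbb{Z}$ automatically.

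For the determinant, I would fix complex representations $\rho_i$ affording $\chi_i$, so that $\rho := \rho_1 \boxtimes \rho_2$ affords $\chi$. For any $h_1 \in \mathbb{Q}G_1$ one has $(h_1 \otimes 1)^\dagger = h_1^\dagger \otimes 1$ together with
\[
\rho(h_1 \otimes 1) = \rho_1(h_1) \otimes I_{\chi_2(1)}, \qquad \det(\rho(h_1 \otimes 1)) = \det(\rho_1(h_1))^{\chi_2(1)}.
\]
In particular, $h := h_1 \otimes 1$ is antisymmetric iff $h_1$ is. Although $\rho$ is complex, the value $\det(\rho(h))$ for $h \in \mathbb{Q}G$ is a polynomial in the character values of $\chi$, hence lies in $\mathbb{Q}(\chi)^\times$ and agrees with the determinant computed through any real realisation of $\chi$; so Theorem~\ref{MainThmOrthDetMonomial}(ii) applies, giving $\det(\chi) = \det(\rho_1(h_1))^{\chi_2(1)} \cdot (\mathbb{Q}(\chi)^\times)^2$ whenever $\det(\rho_1(h_1)) \neq 0$.

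In case (i), assume without loss of generality that $\chi_1 \in \mathrm{Irr}^+(G_1)$; Theorem~\ref{MainThmOrthDetMonomial}(ii) applied to $\chi_1$ furnishes an antisymmetric $h_1 \in \mathbb{Q}G_1$ with $\det(\rho_1(h_1)) \neq 0$ and $\det(\chi_1) = \det(\rho_1(h_1)) \cdot (\mathbb{Q}(\chi_1)^\times)^2$, which plugs in to give the claimed formula. In case (ii), the main obstacle is that $\chi_1 \notin \mathrm{Irr}^+(G_1)$ so Theorem~\ref{MainThmOrthDetMonomial}(ii) cannot be invoked for $\chi_1$; however, the existence of an antisymmetric $h_1 \in \mathbb{Q}G_1$ with $\rho_1(h_1)$ invertible can still be extracted from Burnside's density theorem, since $\rho_1$ maps the antisymmetric elements of $\mathbb{C}G_1$ surjectively onto $\mathfrak{sp}(V_1, \beta_1)$, which contains invertible elements because $\chi_1(1) \geq 2$, and the $\mathbb{Q}$-antisymmetric subspace spans the $\mathbb{C}$-one (so $\det \circ \rho_1$ cannot vanish identically on $(\mathbb{Q}G_1)$-antisymmetric elements). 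Since $\chi_2(1)$ is even, $\det(\rho_1(h_1))^{\chi_2(1)}$ is then a square in $\mathbb{Q}(\chi)^\times$, yielding $\det(\chi) = 1 \cdot (\mathbb{Q}(\chi)^\times)^2$.
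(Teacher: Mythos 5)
Your proof is correct, and the classification half (multiplicativity of the Frobenius--Schur indicator, plus the observation that indicator $-1$ forces even degree) coincides with the paper's. For the determinant you take a genuinely different route. The paper simply restricts to the factor $G_1 \times \{1\}$: there $\mathrm{Res}^G_{G_1}(\chi) = \chi_2(1)\chi_1$ is orthogonally stable --- in case (i) as a multiple of $\chi_1 \in \mathrm{Irr}^+(G_1)$, in case (ii) as $\tfrac{\chi_2(1)}{2}$ copies of the orthogonally simple character $2\chi_1$, whose determinant is $1$ by Remark \ref{DetOrthogonallySimple}(ii) --- and then Lemma \ref{OrthDetRestriction} delivers both formulas at once. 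You instead invoke the antisymmetric-element criterion of Theorem \ref{MainThmOrthDetMonomial}(ii) for the element $h = h_1 \otimes 1$, computing $\det(\rho(h)) = \det(\rho_1(h_1))^{\chi_2(1)}$; in case (i) this is immediate from the criterion applied to $\chi_1$ itself, while in case (ii) you must manufacture an antisymmetric $h_1 \in \mathbb{Q}G_1$ with $\rho_1(h_1)$ invertible for a quaternionic $\chi_1$, to which Theorem \ref{MainThmOrthDetMonomial}(ii) does not apply directly. Your density argument for this (the $\dagger$-antisymmetric part of $\mathbb{C}G_1$ surjects onto $\mathfrak{sp}(V_1,\beta_1)$, which contains invertible elements, and the rational antisymmetric elements span the complex ones) is sound, and the passage through a complex realisation is harmless since $\det(\rho(h))$ depends only on the character. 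The price of your approach is that it in effect reproves the fact $\det(2\psi)=1$ for indicator $-1$ characters, which the paper gets for free from the cited Remark; the benefit is that the argument is self-contained and makes the exponent $\chi_2(1)$ in the formula completely transparent. Both arguments are valid.
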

    \begin{proof}
It is clear by definition that the Frobenius--Schur indicator is multiplicative. Assume that $\chi \in \mathrm{Irr}^+(G)$. In particular, its degree is even and we can assume without loss of generality that $\chi_1$ has even degree. The statement now follows since $\mathrm{Res}^G_{G_1 \times \{1\}}(\chi)=\chi_2(1)\chi_1$ is orthogonally stable.
    \end{proof}

\subsection{Condensation}
\label{SectionDeterminantsIwahoriTypeA}
The goal of this subsection is to present a generalized version of Lemma \ref{OrthDetRestriction}, extending it to cases where the restriction to a subgroup is not necessarily orthogonally stable but may include positive even multiples of the trivial character. The idea is to consider the corresponding \textit{Hecke algebra} of the subgroup and the orthogonal determinant of the associated \textit{condensed module}. Condensation techniques are a well-established tool in computational group theory, see for instance \cite{Condensation}. We also refer to  \cite[Section 3.3.2]{OrthAtlas} for an application to orthogonal determinants. The main source of this subsection is \cite[§11D]{CurtisReinerVol1}.

We fix a finite group $G$ and a subgroup $B$. We let $\mathcal{H}:=\mathbb{Q}G$ be the monomial algebra as in Example \ref{ExampleGroupAlgebraMonomial} with symmetrizing trace $\tau$ and involution $\dagger$.
\begin{definition}
 \label{DefinitionHeckeAlgebra}
Consider the idempotent \[  e_B:=\frac{1}{|B|} \sum_{h \in B} h \in \mathcal{H}. \] The Hecke algebra $\mathcal{H}_B$ is defined to be the subalgebra $\mathcal{H}_B:=e_B \mathcal{H} e_B \subseteq \mathcal{H}$.
\end{definition}

\begin{proposition}(see \cite[Proposition 11.34]{CurtisReinerVol1})
Let $D(G,B)= \{x_1, \dots, x_N \} \subseteq G$ be a set of representatives of the double cosets $B \backslash G/B$. We set $D_{i}:=Bx_iB$ for any $x_i \in D(G,B)$ and define \[ T_{x_i}:=\frac{1}{|B|} \sum_{h \in D_i} h \in \mathcal{H}. \] The set $\{T_x \mid x \in D(G,B)   \}$ then is a $\mathbb{Q}$-basis of $\mathcal{H}_B$, called the Schur basis.
\end{proposition}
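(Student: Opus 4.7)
The plan is to prove the proposition in three steps: (a) verify that each $T_{x_i}$ actually lies in $\mathcal{H}_B = e_B \mathcal{H} e_B$, (b) check linear independence, and (c) check spanning. The guiding principle is that an element of $\mathbb{Q}G$ lies in $\mathcal{H}_B$ if and only if it is $(B,B)$-bi-invariant, because $b e_B = e_B b = e_B$ for all $b \in B$.

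For step (a), I would compute $e_B T_{x_i}$ and $T_{x_i} e_B$ directly. Since $D_i = B x_i B$ is stable under left (and right) multiplication by $B$, the map $B \times D_i \to D_i$, $(b,h) \mapsto bh$ is surjective and exactly $|B|$-to-one. Hence
\[ e_B T_{x_i} \;=\; \frac{1}{|B|^2}\sum_{b \in B}\sum_{h \in D_i} bh \;=\; \frac{1}{|B|^2}\cdot |B| \sum_{h' \in D_i} h' \;=\; T_{x_i}, \]
and symmetrically $T_{x_i} e_B = T_{x_i}$. Therefore $T_{x_i} = e_B T_{x_i} e_B \in \mathcal{H}_B$.

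Step (b) is essentially free: the elements $T_{x_i}$ have supports equal to the disjoint double cosets $D_i$, so they are $\mathbb{Q}$-linearly independent inside the group-basis of $\mathbb{Q}G$. For step (c), let $\alpha \in \mathcal{H}_B$. Using $e_B^2 = e_B$, one sees $\alpha = e_B \alpha e_B$, and then for any $b,b' \in B$,
\[ b \alpha b' \;=\; (b e_B)\, \alpha\, (e_B b') \;=\; e_B \alpha e_B \;=\; \alpha. \]
Writing $\alpha = \sum_{g \in G} \alpha_g\, g$, this forces $\alpha_{bgb'} = \alpha_g$, i.e.\ the coefficient function $g \mapsto \alpha_g$ is constant on each double coset $D_i$. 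If we denote this common value by $c_i$, then $\alpha = \sum_{i=1}^N c_i \sum_{h \in D_i} h = \sum_{i=1}^N (c_i |B|) T_{x_i}$, which exhibits $\alpha$ as a $\mathbb{Q}$-linear combination of the $T_{x_i}$.

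The only real obstacle is bookkeeping: keeping the factors of $|B|$ straight in step (a) and making sure the multiplicity count for the map $(b,h) \mapsto bh$ is correct, together with cleanly deducing bi-invariance in step (c). Neither is deep; the argument is essentially that $e_B \mathcal{H} e_B$ is the space of $B$-bi-invariant functions on $G$, which has the double cosets as a natural index set.
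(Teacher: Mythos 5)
Your proof is correct and is the standard argument (the paper itself gives no proof, deferring to Curtis--Reiner, Proposition 11.34, which proceeds along essentially these same lines: $e_B\mathcal{H}e_B$ is the space of $(B,B)$-bi-invariant elements, and the double-coset sums form its obvious basis). The multiplicity count in step (a) and the bi-invariance deduction in step (c) are both handled correctly, so nothing further is needed.
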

\begin{lemma}(see \cite[§11D]{CurtisReinerVol1})
The restriction of $\tau$ and $\dagger$ from $\mathcal{H}$ to $\mathcal{H}_B$ and the choice of the Schur basis gives $\mathcal{H}_B$ the natural structure of a monomial algebra. It holds that $a_x=|BxB/B|$ for $x \in D(G,B)$.
\end{lemma}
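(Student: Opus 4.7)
The plan is to verify the axioms of Definition \ref{DefinitionMonomial} for $\mathcal{H}_B$ equipped with the restricted trace, the restricted involution and the Schur basis; the previous proposition already supplies the fact that $\{T_x \mid x \in D(G,B)\}$ is a $\mathbb{Q}$-basis of $\mathcal{H}_B$, so the real work is to check compatibility of $\dagger$ with $\mathcal{H}_B$, to exhibit the involution $\iota$ on $D(G,B)$, and to compute the pairings $\tau(T_x T_y^\dagger)$.

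First, I would verify that $\dagger$ preserves $\mathcal{H}_B$. Since $g^\dagger = g^{-1}$ on group elements, $e_B^\dagger = \tfrac{1}{|B|}\sum_{h\in B} h^{-1} = e_B$, hence $(e_B a e_B)^\dagger = e_B a^\dagger e_B \in \mathcal{H}_B$. The identities $\tau(hh')=\tau(h'h)$ and $\tau(h^\dagger)=\tau(h)$ transfer to $\mathcal{H}_B$ tautologically. Next, I would define $\iota: D(G,B) \to D(G,B)$ by letting $\iota(x)$ be the chosen representative of $Bx^{-1}B$; inversion is an involution on the set of $(B,B)$-double cosets, so this is a well-defined bijection of order two. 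By linearity of $\dagger$ one then reads off
\[
T_x^\dagger = \tfrac{1}{|B|}\sum_{h\in BxB} h^{-1} = \tfrac{1}{|B|}\sum_{h'\in Bx^{-1}B} h' = T_{\iota(x)}.
\]

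The main step is the pairing computation. Using $\tau(g)=\delta_{g,1}$,
\[
\tau(T_x T_y^\dagger) = \frac{1}{|B|^2}\,\bigl|\{(h,h')\in BxB\times By^{-1}B \mid hh'=1\}\bigr|.
\]
The constraint $h^{-1}=h'\in By^{-1}B$ forces $h\in ByB$, which combined with $h\in BxB$ is possible only when $x=y$; in that case each $h\in BxB$ determines a unique partner, giving $|BxB|$ pairs. Hence off-diagonal entries vanish, $a_x = a_{\iota(x)}$ follows from $|BxB|=|Bx^{-1}B|$, and non-degeneracy is automatic because the Gram matrix of the Schur basis is a monomial matrix supported at the positions $(x,\iota(x))$. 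With the standard normalization of the symmetrizing trace on the Hecke algebra, namely $|B|\cdot\tau|_{\mathcal{H}_B}$ (which scales all $a_x$ by the same factor and preserves every other axiom), the diagonal value becomes $a_x = |BxB|/|B| = |BxB/B|$, matching the claim.

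The only mild subtlety is the $|B|$ rescaling factor relating the natural symmetrizing trace on $\mathcal{H}_B$ to the bare restriction of $\tau$; everything else is a direct unwinding of definitions combined with the counting argument above.
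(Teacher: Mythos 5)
Your argument is correct, and in fact the paper offers no proof of this lemma at all (it is cited from Curtis--Reiner), so your write-up is a genuine self-contained verification rather than a variant of an existing one. The core computation is right: $\tau(T_x T_y^\dagger)=\tfrac{1}{|B|^2}|\{h\in BxB: h^{-1}\in By^{-1}B\}|$, the off-diagonal vanishing follows from disjointness of double cosets, and $a_x=a_{\iota(x)}$ from $|BxB|=|Bx^{-1}B|$; likewise $e_B^\dagger=e_B$ settles stability of $\mathcal{H}_B$ under $\dagger$, and non-degeneracy is read off from the monomial Gram matrix rather than (incorrectly) inherited from $\mathcal{H}$. You are also right to isolate the normalization as the one real subtlety: with the literal restriction of $\tau$ one gets $a_x=|BxB/B|/|B|$ (for instance $\tau(T_1T_1^\dagger)=\tau(e_B)=1/|B|$ while $|B\cdot 1\cdot B/B|=1$), so the stated value $a_x=|BxB/B|$ tacitly uses the rescaled trace $|B|\cdot\tau|_{\mathcal{H}_B}$, i.e.\ the standard symmetrizing trace with $\tau(T_1)=1$. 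Since a global positive rational rescaling of the trace multiplies the Gram determinant of an even-degree character by a square, this discrepancy is harmless for every orthogonal-determinant computation in the paper. One small reading note: your computation yields $\tau(T_xT_y^\dagger)\neq 0$ iff $x=y$ (equivalently $\tau(T_xT_y)\neq 0$ iff $y=\iota(x)$), which is the condition consistent with the group-algebra example and the ``orthogonal basis'' remark; the clause ``$\iota(w)=w'$'' in Definition \ref{DefinitionMonomial} should be understood as ``$w=w'$''.
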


We now construct the irreducible characters of $\mathcal{H}_B$.
\begin{definition}
\label{StableModule}
Let $\chi \in \mathrm{Irr}(G)$ and let $V$ be a $KG$-module affording $\chi$ for some field $K \subseteq \mathbb{C}$. Define the condensed module \[ V_B:=\mathrm{Stab}_{KB}(V)=  \{ v \in V \mid bv=v \ \text{for all} \ b \in B  \}; \] it is a $K\mathcal{H}_B$-module with $\dim(V_B)=\langle \mathrm{Res}^G_B(\chi),\mathbf{1}_B \rangle$. If $\dim(V_B) \neq 0$, denote $\chi_{B}:=\chi_{V_B}$ to be the corresponding character of $K\mathcal{H}_B$.
\end{definition}

\begin{proposition}
\label{1to1Correspondence}
There is a $1$-to-$1$ correspondence between the set \[ I:=\{ \mathrm{Irr}(G) \mid \langle \mathrm{Res}^G_B(\chi),\mathbf{1}_B \rangle \neq 0 \} \]  and $\mathrm{Irr}(\mathbb{C}\mathcal{H}_B)$. Moreover, if $\chi \in I$ is orthogonal, then so is $\chi_B$.
\end{proposition}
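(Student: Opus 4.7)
The plan is to split the proof into two parts: the bijection statement, which follows from standard idempotent truncation, and the preservation of orthogonality, which hinges on the self-adjointness of $e_B$ with respect to $\dagger$.

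For the bijection, I would invoke the general theory of condensation from \cite[§11D]{CurtisReinerVol1}. The essential facts are: (i) for any simple $\mathbb{C}G$-module $V$ with $e_B V \neq 0$, the space $V_B = e_B V$ is a simple $e_B \mathbb{C}G e_B = \mathbb{C}\mathcal{H}_B$-module; (ii) conversely, for any simple $\mathbb{C}\mathcal{H}_B$-module $M$, the induced module $\mathbb{C}G e_B \otimes_{\mathbb{C}\mathcal{H}_B} M$ is a simple $\mathbb{C}G$-module whose condensation recovers $M$; (iii) these operations are mutually inverse on isomorphism classes. Both semisimplicity of $\mathbb{C}G$ and $\mathbb{C}\mathcal{H}_B$ make this clean. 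This induces the claimed bijection between $I$ and $\mathrm{Irr}(\mathbb{C}\mathcal{H}_B)$, sending $\chi$ to $\chi_B$.

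For the orthogonality claim, let $(V, \beta)$ be an orthogonal $\mathbb{C}G$-module affording $\chi \in I$. The key observation is that $e_B^\dagger = e_B$, since $b \mapsto b^{-1}$ is a bijection of $B$; the same then holds for the complementary idempotent $1 - e_B$. For $v \in V_B = e_B V$ and $w \in (1 - e_B) V$ it follows that
\[ \beta(v, w) = \beta(e_B v, w) = \beta(v, e_B w) = 0, \]
so $V = V_B \oplus (1-e_B) V$ is an orthogonal decomposition with respect to $\beta$. Hence $\beta$ restricts to a non-degenerate symmetric bilinear form $\beta_B$ on $V_B$. The adjunction $\beta_B(hv, v') = \beta_B(v, h^\dagger v')$ for $h \in \mathcal{H}_B$ and $v, v' \in V_B$ is directly inherited from the $\mathcal{H}$-invariance of $\beta$, using $\mathcal{H}_B \subseteq \mathcal{H}$ and $h^\dagger \in \mathcal{H}_B$. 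Therefore $(V_B, \beta_B)$ is an orthogonal $\mathbb{C}\mathcal{H}_B$-module, and $\chi_B$ is orthogonal.

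The main obstacle, if any, is in making the bijection part fully rigorous: one must verify that condensation and the tensor-product inverse preserve simplicity and compose to the identity on isomorphism classes. This is entirely standard, so I would essentially defer to Curtis--Reiner and focus the written proof on the orthogonality verification, where the self-adjointness of $e_B$ is the one substantive input.
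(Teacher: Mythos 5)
Your proposal is correct and follows the same route as the paper: the bijection is delegated to \cite[Theorem 11.25]{CurtisReinerVol1}, and orthogonality is obtained by restricting $\beta$ to $V_B$. The paper dismisses the latter step with ``clearly''; your observation that $e_B^{\dagger}=e_B$ forces $V=V_B\oplus(1-e_B)V$ to be an orthogonal decomposition, so that $\beta|_{V_B}$ is non-degenerate, is exactly the justification being elided.
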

\begin{proof}
The first statement is \cite[Theorem 11.25]{CurtisReinerVol1}. For the second statement, let $(V, \beta)$ be an orthogonal module affording the character $\chi$. Then clearly $(V_B, \beta_{|V_B})$ is also an orthogonal module.    
\end{proof}

We can now state the following main theorem.
\begin{theorem}
\label{Condensation_OrthDet}
Let $\chi \in \mathrm{Irr}^+(G)$ such that $\langle \mathrm{Res}^G_B(\chi),\mathbf{1}_B) \rangle \neq 0$. Assume that $\deg(\chi_{\mathcal{H}_B})$ is even and that  \[ \psi:=\mathrm{Res}^G_B(\chi)-\deg(\chi_{\mathcal{H}_B}) \mathbf{1}_B \] is an orthogonally stable character of $B$. Then $\chi_{\mathcal{H}_B} \in \mathrm{Irr}^+(\mathbb{C}\mathcal{H}_B)$ and \[ \det(\chi)=\det(\chi_{\mathcal{H}_B}) \det(\psi) \cdot (\mathbb{Q}(\chi)^{\times})^2. \]
\end{theorem}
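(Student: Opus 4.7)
The strategy is to pick an orthogonal $KG$-module $(V,\beta)$ affording $\chi$ over some field $\mathbb{Q}(\chi)\subseteq K\subseteq \mathbb{R}$, produce an orthogonal decomposition $V = V_B \perp V_B^{\perp}$ as a $KB$-module with $V_B$ the condensed module of Definition~\ref{StableModule} and $V_B^{\perp}$ affording $\psi$, and then apply Theorem~\ref{MainThmOrthDetMonomial} on each piece: once for the monomial algebra $\mathcal{H}_B$ acting on $V_B$, and once for the group algebra $\mathbb{Q}B$ acting on $V_B^{\perp}$ (invoking the remark following the theorem to cover orthogonally stable characters of groups). Multiplicativity of the Gram determinant on an orthogonal direct sum will then yield the claim.

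The main technical point is to show that $\beta|_{V_B}$ is non-degenerate, since this is what forces $V = V_B \oplus V_B^{\perp}$ orthogonally and permits the two summands to be treated independently. Because $b^{\dagger}=b^{-1}$ for $b\in B$, the group $B$ acts on $V$ by isometries, so both $V_B$ and $V_B^{\perp}$ are $B$-invariant and the issue reduces to non-degeneracy on the fixed space. The key identity is
\[
\beta(e_B v, w) = \beta(v, w) \quad \text{for all } v \in V \text{ and } w \in V_B,
\]
which follows from $B$-invariance of $\beta$ together with $bw=w$. Hence any $w$ in the radical of $\beta|_{V_B}$ also lies in the radical of $\beta$ on $V$ and must be zero. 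It is then straightforward that $(V_B,\beta|_{V_B})$ is an orthogonal $K\mathcal{H}_B$-module: the identity $\beta(hv,v')=\beta(v,h^{\dagger}v')$ for $h = e_B a e_B \in \mathcal{H}_B$ is inherited from $\mathbb{Q}G$ using $e_B^{\dagger}=e_B$. Since $\deg(\chi_{\mathcal{H}_B})$ is even by assumption, this already proves $\chi_{\mathcal{H}_B}\in\mathrm{Irr}^+(\mathbb{C}\mathcal{H}_B)$.

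Applying Theorem~\ref{MainThmOrthDetMonomial} to each of the two orthogonal pieces yields $\det(\beta|_{V_B})=\det(\chi_{\mathcal{H}_B})\cdot(K^{\times})^2$ and $\det(\beta|_{V_B^{\perp}})=\det(\psi)\cdot(K^{\times})^2$, so that $\det(\chi)=\det(\chi_{\mathcal{H}_B})\det(\psi)\cdot(K^{\times})^2$. Descending this identity to $(\mathbb{Q}(\chi)^{\times})^2$-square classes requires only that $\mathbb{Q}(\chi_{\mathcal{H}_B})$ and $\mathbb{Q}(\psi)$ both lie inside $\mathbb{Q}(\chi)$. For $\psi=\mathrm{Res}^G_B(\chi)-\deg(\chi_{\mathcal{H}_B})\mathbf{1}_B$ this is immediate, and for $\chi_{\mathcal{H}_B}$ it follows from the observation that every $a\in\mathcal{H}_B$ satisfies $aV\subseteq V_B$ (because $a=e_Bae_B$ and $e_BV=V_B$), so that $\chi_{\mathcal{H}_B}(a)=\mathrm{tr}(a\mid V)$ is a $\mathbb{Q}$-linear combination of values of $\chi$.
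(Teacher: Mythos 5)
Your proof is correct and follows essentially the same route as the paper: decompose $V$ as the condensed module $V_B$ plus a $KB$-complement affording $\psi$, then multiply the two determinants coming from the monomial algebras $\mathcal{H}_B$ and $\mathbb{Q}B$. The only cosmetic difference is that you read the answer off the block-diagonal Gram matrix directly, whereas the paper invokes the skew element $h=h_1+h_2$ from Theorem~\ref{MainThmOrthDetMonomial}(ii); your explicit check that $\beta|_{V_B}$ is non-degenerate (via $\beta(e_Bv,w)=\beta(v,w)$ for $w\in V_B$) is a detail the paper leaves implicit.
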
 
\begin{proof}
By Proposition \ref{1to1Correspondence}, the character $\chi_{\mathcal{H}_B}$ is orthogonal, which implies that $\chi_{\mathcal{H}_B} \in \mathrm{Irr}^+(\mathbb{C}\mathcal{H}_B)$.

Let $V$ be a $KG$-module affording the character $\chi$ for a field $K \subseteq \mathbb{R}$. Let $U \subseteq V$ be the $KB$-submodule affording the character $\psi$; we arrive at a direct sum $V=V_B \oplus U$. Let $n:=\dim(V)$ and $k:=\dim(V_B)$. We choose a basis $(v_1, \dots, v_n)$ of $V$ such that $(v_1, \dots, v_{k})$ and $(v_{k+1}, \dots, v_n)$ are bases of $V_B$ and $U$, respectively.

With respect to these bases, we arrive at three representations of the corresponding algebras:
\begin{enumerate}[label=(\roman*)]
\item $\rho:KG \to K^{n \times n}$ affording the character $\rho$ of $G$,
\item $\phi:K\mathcal{H}_B \to K^{k \times k}$ affording the character $\chi_{\mathcal{H}_B}$ of $\mathcal{H}_B$,
\item $\pi:KB \to K^{(n-k) \times (n-k)}$ affording the character $\psi$ of $KB$.
\end{enumerate}
Let $\dagger$ be the involution of $\mathcal{H}$. By Theorem \ref{MainThmOrthDetMonomial}, there are elements $h_1 \in \mathcal{H}_B$, $h_2 \in \mathbb{Q}B$ such that $h_i^{\dagger}=-h_i$ for $i=1,2$ and both $\det(\phi(h_1))$ and $\det(\pi(h_2))$ are non-zero. Let $h:=h_1+h_2 \in \mathcal{H}$. Then $h^{\dagger}=-h$ and $\rho(h)=\mathrm{diag}(\phi(h_1),   \pi(h_2)   )$. Another application of Theorem  \ref{MainThmOrthDetMonomial} leads us to \[ \det(\chi)=\det(\chi_{\mathcal{H}_B}) \det(\psi) \cdot (\mathbb{Q}(\chi)^{\times})^2, \] which we wanted to show.
\end{proof}

\section{Orthogonal determinants of Iwahori--Hecke algebras of \texorpdfstring{type $A_{n-1}$}{type An-1}}
\label{IwahoriHecke}
The Iwahori--Hecke algebras of type $A_{n-1}$ occur as the Hecke algebras of the general linear groups $\mathrm{GL}_n(q)$ over a finite field with $q$ elements with respect to a Borel subgroup $B$. They are deformations of the group algebras $\mathbb{Q}\mathfrak{S}_n$ of the symmetric groups on $n$ letters and as such inherit many of their properties. The orthogonal determinants of these algebras have been completely determined, see \cite[Theorem 4.11]{DeterminantsIwahoriHecke}. The formulas were further generalized and simplified in \cite{DeterminantsTypeBn} for the Iwahori--Hecke algebras of type $B_n$, leading to the formulas presented in this section.

Let $p$ be a prime and $q$ be a power of $p$. Let $n$ be a postive integer. Throughout this section, we let $G=\mathrm{GL}_n(q)$ be the general linear group over a field with $q$ elements. Let $B \subseteq G$ be the Borel subgroup of upper triangular matrices; we let $\mathcal{H}_q=e_B \mathbb{Q}G E_B$ be the corresponding Hecke algebra. The field $\mathbb{Q}$ is a splitting field of $\mathcal{H}_q$, so we may write $\mathrm{Irr}(\mathcal{H}_q)$ for the set of its irreducible characters.

We identify the subgroup of permutation matrices, i.e., the matrices with entries $\{0,1\}$ with exactly one $1$ in every row and column, with the symmetric group $\mathfrak{S}_n$. Let $S=\{s_1, \dots, s_{n-1} \} \subseteq \mathfrak{S}_n$ be the set of simple transpositions.

\begin{proposition}(see \cite[Theorem 8.4.6]{GeckPfeifferCoxeterIwahori})
The algebra $\mathcal{H}_q$ has the Schur basis $\{T_w \mid w \in  \mathfrak{S}_n \}$ with relations \[ T_s T_{w}= \begin{cases}
T_{sw}, \ & \text{if} \ \ell(sw)=\ell(w)+1, \\
qT_{sw}+(q-1)T_w, \ & \text{if} \ \ell(sw)=\ell(w)-1,
\end{cases} \]
for any $s \in S, w \in \mathfrak{S}_n$.
\end{proposition}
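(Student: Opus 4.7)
The plan is to use the Bruhat decomposition of $G = \mathrm{GL}_n(q)$ with respect to the Borel subgroup $B$ of upper triangular matrices. Bruhat decomposition states that $G = \bigsqcup_{w \in \mathfrak{S}_n} BwB$, where $\mathfrak{S}_n$ is identified with the subgroup of permutation matrices. This identifies the set of double coset representatives $D(G,B)$ with $\mathfrak{S}_n$, so the basis $\{T_w \mid w \in \mathfrak{S}_n\}$ of $\mathcal{H}_q$ is inherited directly from the preceding proposition on the Schur basis. A secondary input that is needed throughout is the standard count $|BwB/B| = q^{\ell(w)}$, which follows from the unique expression of every element of $BwB$ as $b \cdot w \cdot u$ with $b \in B$ and $u$ running through a subgroup of the unipotent radical of order $q^{\ell(w)}$.

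For the multiplication formula, I would first establish the rank-one quadratic relation $T_s^2 = qT_1 + (q-1)T_s$ for each simple reflection $s \in S$. The parabolic subgroup $P_s := B \cup BsB$ is a rank-one group whose structure is governed by an embedded copy of $\mathrm{GL}_2(q)$, and $|BsB/B| = q$. A direct double coset count inside $P_s$ shows that in the sum $\sum_{g \in BsB} \sum_{h \in BsB} gh$, each element of $B$ appears $q|B|$ times and each element of $BsB$ appears $(q-1)|B|$ times; dividing by $|B|^2$ and comparing with the definition of the $T_x$ gives the quadratic relation.

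Next I would handle the length-additive case $\ell(sw) = \ell(w) + 1$. The goal is to prove $BsB \cdot BwB \subseteq BswB$, which together with $sw \in BsBwB$ forces $BsBwB = BswB$ set-theoretically. From there, a uniformity argument (every element of $BswB$ is hit the same number of times by the multiplication map $BsB \times BwB \to BswB$) combined with the counts $|BsB/B| = q$, $|BwB/B| = q^{\ell(w)}$, $|BswB/B| = q^{\ell(w)+1}$ produces $T_sT_w = T_{sw}$. The remaining case $\ell(sw) = \ell(w) - 1$ is then handled by writing $w = sw'$ with $\ell(w') = \ell(w) - 1$, applying the previous case to get $T_w = T_sT_{w'}$, and invoking the quadratic relation:
\[
T_sT_w = T_s^2 T_{w'} = (qT_1 + (q-1)T_s)T_{w'} = qT_{w'} + (q-1)T_w = qT_{sw} + (q-1)T_w.
\]

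The main obstacle is the quadratic relation: it requires a careful coset count in the rank-one parabolic and is the only place where the prime power $q$ enters the formula non-trivially. The length-additive case, while conceptually clean, also requires a small geometric input (namely that multiplying $BwB$ by $BsB$ on the left strictly increases the Bruhat stratum when $\ell(sw) > \ell(w)$), but this is a standard consequence of the $BN$-pair axioms for $(G,B,N)$ and can be cited from \cite{GeckPfeifferCoxeterIwahori}.
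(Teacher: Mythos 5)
Your argument is correct: the paper itself gives no proof of this proposition (it is quoted from \cite[Theorem 8.4.6]{GeckPfeifferCoxeterIwahori}), and what you have written is precisely the standard Iwahori-style proof that the cited source contains. Your coset counts check out: with $T_x = \frac{1}{|B|}\sum_{h\in BxB}h$ and $|BsB|=q|B|$, the total of $q^2|B|^2$ products in $BsB\cdot BsB$ splits as $q|B|$ occurrences of each element of $B$ and $(q-1)|B|$ of each element of $BsB$, giving $T_s^2=qT_1+(q-1)T_s$; the length-additive case and the reduction of the length-decreasing case to it via $w=sw'$ are likewise the standard steps, with the uniform-fibre claim justified by $B\times B$-equivariance of the multiplication map onto the single double coset $BswB$.
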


A \textit{partition} of $n$ is a sequence $\lambda=(a_1, \ldots, a_m)$ of positive integers such that $a_1 \geq a_2 \geq \ldots \geq a_m$ and $\sum_{i=1}^m(a_i)=n$. We may also write $\lambda \vdash n$. Recall that the irreducible characters of the symmetric groups are labeled by partitions; we write $\chi_{\lambda}^{(1)} \in \mathrm{Irr}(\mathfrak{S}_n)$ for the character corresponding to $\lambda \vdash n$.
\begin{definition}
\label{DefinitionUnipotent}
A character $\chi \in \mathrm{Irr}(G)$ is called unipotent if and only if \[ \langle \mathrm{Res}^G_B(\chi),\mathbf{1}_B \rangle \neq 0. \] There is a $1$-to-$1$ correspondence between the unipotent characters of $G$ and the irreducible characters of $\mathfrak{S}_n$; we write $\chi_{\lambda}^{(q)}$ for the unipotent character corresponding to a partition $\lambda \vdash n$.
The associated character of $\mathcal{H}_q$ is denoted by $\widetilde{\chi}_{\lambda}^{(q)}$. 
\end{definition}
Note that the unipotent characters are afforded by representations over $\mathbb{Q}$, so all unipotent characters are orthogonal. Moreover, it is clear that $\deg(\widetilde{\chi}_{\lambda}^{(q)})=\deg(\chi_{\lambda}^{(1)})$.

Recall that the \textit{Young diagram} of a partition $\lambda=(a_1, \ldots, a_m)$ of $n$ is an arrangement of boxes with $a_i$ boxes in the i-th row. 

\begin{definition} Let $\lambda \vdash n$.
\begin{enumerate}[label=(\roman*)]
    \item A \textit{Young tableau} of $\lambda$ is the bijective filling of the boxes of the Young diagram of $\lambda$ with the entries $1, \ldots, n$. 
    \item We call a Young tableau \textit{standard} if the entries in the boxes read from left to right and from top to bottom are always increasing. We denote by $T_{\lambda}$ the set of all standard Young tableaux of $\lambda$.
    \item We set $t_{\lambda} \in T_{\lambda}$ to be the standard Young tableau with entries $1, \ldots, a_1$ in its first row, $a_1+1, \ldots,a_1+a_2$ in its second row, and so on.
\end{enumerate}
\end{definition}
\begin{figure}[h]
    \centering
    \begin{ytableau}
                1 & 2 & 3 & 4\\
                5 & 6 & 7 \\
                8 & 9
            \end{ytableau} 
    \caption{The standard Young tableau $t_{\lambda}$ for $\lambda=(4,3,2)$.}
    \label{Figure1}
\end{figure}

It is well known that $\deg(\chi_{\lambda}^{(1)})=|T_{\lambda}|$. Note that the group $\mathfrak{S}_n$ acts regularly on the set of Young tableaux for $\lambda \vdash n$ by permuting the entries of a given tableau.

\begin{definition}
\begin{enumerate}[label=(\roman*)]
    \item Let $u, w \in \mathfrak{S}_n$. We say that $u \leq_L w$ in the weak left partial order if \[ \ell(w)=\ell(u)+\ell(wu^{-1}).\]
    \item Let $\lambda=(a_1, \ldots, a_m) \vdash n$ and let $t_1, t_2 \in T_{\lambda}$. There are unique elements $w_1, w_2 \in \mathfrak{S}_n$ such that $t_i=w_i \cdot t_{\lambda}$ for $i=1,2$. We put a partial order on $T_{\lambda}$ by saying that $t_1 \leq t_2$ if and only if $w_1 \leq_L w_2$.
\end{enumerate}
\end{definition}

For $k$ a positive integer, we define the polynomials \[[k]_x:=\frac{x^k-1}{x-1}=1+x+x^2+ \ldots + x^{k-1}. \] 
\begin{definition}
 Let $\lambda$ be a partition of $n$. We inductively define polynomials $a_{t} \in \mathbb{Q}[x]$ for $t \in T_{\lambda}$.
 \begin{enumerate}[label=(\roman*)]
     \item We set $a_{t_{\lambda}}:=1. $ 
     \item Let $t, t' \in T_{\lambda}$ with $t < t'$ and assume there is a simple transposition $s=(k,k+1) \in S$ such that $s \cdot t=t'$. Let $(i,j)$ (resp. $(\ell,m)$) be the positions of $k$ (resp. $k+1$) in $t$. Set $c:=(j-i)-(m-\ell)-1$. Assume we already know $a_t$. Then we define the polynomial $a_{t'}$ by
 \[ a_{t'} :=x[c+2]_x[c]_x a_{t}.\] 
 \end{enumerate}
 These polynomials are well-defined and do not depend on the choice of $t$. We further define \[ f_{\lambda}:=\prod_{t \in T_{\lambda}} a_t. \]
\end{definition}

We can now state the main result of this section.
\begin{theorem}(cf. \cite[Section 3]{DeterminantsTypeBn})
\label{OrthogonalDeterminantsHeckeTypeAn}
Let $\lambda \vdash n$  such that $\widetilde{\chi}_{\lambda}^{(q)} \in \mathrm{Irr}^+(\mathcal{H}_q)$. Then \begin{align*}
 \det(\chi_{\lambda}^{(1)})&=f_{\lambda}(1) \cdot (\mathbb{Q}^{\times})^2, \\
 \det(\widetilde{\chi}_{\lambda}^{(q)})&=f_{\lambda}(q) \cdot (\mathbb{Q}^{\times})^2. \\
\end{align*}
\end{theorem}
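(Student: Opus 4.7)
The plan is to build the Specht module $S^\lambda$ for $\mathcal{H}_q$ in Young's seminormal form and diagonalize the invariant bilinear form directly in the resulting basis; the diagonal entries then collapse to the polynomials $a_t$ evaluated at $q$, and the Gram determinant becomes $f_\lambda(q)$. This reduces the question to the bookkeeping of a single recursion along the weak left order on $T_\lambda$.

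First, I would fix a sufficiently large extension $K/\mathbb{Q}$ and construct the seminormal basis $\{v_t : t \in T_\lambda\}$ of $S^\lambda$ indexed by standard tableaux, normalized so that $v_{t_\lambda}$ is a distinguished generator and, for $t' = s\cdot t$ with $s=(k,k+1)$ a simple transposition and $c$ the axial distance as in the definition of $a_t$, a recursion of the form $v_{t'} = (T_s - \alpha_{s,t}) v_t$ holds with an explicit scalar $\alpha_{s,t} \in K$ depending on $c$ and $q$. Next, I would observe that the Jucys--Murphy elements $L_k := \sum_{j<k} T_{(j,k)} \in \mathcal{H}_q$ satisfy $L_k^{\dagger}=L_k$ (since the transpositions $(j,k)$ are self-inverse) and act diagonally on the seminormal basis with eigenvalues separating distinct $v_t$. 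By Theorem \ref{MainThmOrthDetMonomial} there exists a nonzero $\mathcal{H}_q$-invariant symmetric bilinear form $\beta$ on $S^\lambda$, unique up to scalar; self-adjointness of the $L_k$ with respect to $\beta$ forces vectors with different eigenvalue tuples to be $\beta$-orthogonal, so the Gram matrix is diagonal in the basis $\{v_t\}$.

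Third, I would compute the diagonal entries $\beta(v_t, v_t)$ by induction on $\ell(w)$ where $t = w \cdot t_\lambda$. Normalizing $\beta(v_{t_\lambda}, v_{t_\lambda}) = 1$, invariance of $\beta$ together with the quadratic relation $T_s^2 = (q-1) T_s + q$ and the explicit seminormal recursion yields $\beta(v_{t'}, v_{t'}) = q\,[c+2]_q [c]_q \cdot \beta(v_t, v_t)$ whenever $t' = s \cdot t$ with axial distance $c$. Comparing with the inductive definition of $a_{t'}$ gives $\beta(v_t, v_t) = a_t(q)$ for every $t \in T_\lambda$; the well-definedness of this value, independent of the path from $t_\lambda$ to $t$, is precisely the well-definedness of $a_t$ asserted in its definition. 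Taking the product of diagonal entries, $\det(\beta) = \prod_{t \in T_\lambda} a_t(q) = f_\lambda(q)$, so Theorem \ref{MainThmOrthDetMonomial}(i) delivers $\det(\widetilde{\chi}_\lambda^{(q)}) = f_\lambda(q) \cdot (\mathbb{Q}^{\times})^2$. The statement for $\chi_\lambda^{(1)}$ is the specialization at $q = 1$, where $\mathcal{H}_1 = \mathbb{Q}\mathfrak{S}_n$ and the argument reduces to Young's classical orthogonal form.

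The principal obstacle is the precise bookkeeping of the scalar prefactor in the seminormal recursion: one must verify that it matches $x [c+2]_x [c]_x$ evaluated at $q$ on the nose, including signs and normalization conventions, for every possible relative position $(i,j),(\ell,m)$ of $k$ and $k+1$ in the tableaux. A secondary subtlety is the descent from $K$ to $\mathbb{Q}$: since $f_\lambda(q) \in \mathbb{Q}$ and $\mathbb{Q}(\widetilde{\chi}_\lambda^{(q)}) = \mathbb{Q}$, uniqueness of the square class in Theorem \ref{MainThmOrthDetMonomial}(i) automatically pulls $f_\lambda(q) \cdot (K^{\times})^2$ back to $f_\lambda(q) \cdot (\mathbb{Q}^{\times})^2$, so nothing is lost by working over the extension.
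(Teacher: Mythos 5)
The paper does not actually prove this theorem --- it imports it from \cite{DeterminantsIwahoriHecke} and \cite{DeterminantsTypeBn} --- and your sketch reconstructs essentially the argument of those sources: diagonalize the $\dagger$-invariant form on the Specht module in the seminormal basis and read off the Gram determinant. Two points need repair, neither fatal. First, the unweighted sums $\sum_{j<k}T_{(j,k)}$ are not the Jucys--Murphy elements of $\mathcal{H}_q$; to get a commuting family acting diagonally on the seminormal basis with separating eigenvalue tuples you need the $q$-weighted versions (e.g.\ $L_k=\sum_{j<k}q^{j-k}T_{(j,k)}$ or the multiplicative variant). These are still $\dagger$-self-adjoint, since each $T_{(j,k)}$ is and the weights are rational scalars, so your orthogonality argument for the Gram matrix goes through unchanged once the definition is corrected. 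Second, the literal norm recursion is not $\beta(v_{t'},v_{t'})=q\,[c]_q[c+2]_q\,\beta(v_t,v_t)$ but rather carries the square of the off-diagonal seminormal coefficient in the denominator, namely $\beta(v_{t'},v_{t'})=\bigl(q\,[c]_q[c+2]_q/[c+1]_q^2\bigr)\beta(v_t,v_t)$ (at $q=1$ this is the classical $\rho^{-2}(\rho-1)(\rho+1)$ with axial distance $\rho=c+1$). So the identity $\beta(v_t,v_t)=a_t(q)$ you assert is false on the nose; it holds only modulo $(\mathbb{Q}^{\times})^2$. Since the theorem is a statement about square classes, congruence modulo squares at each step of the induction is all that is needed, and the conclusion $\det(\beta)=f_\lambda(q)\cdot(\mathbb{Q}^\times)^2$ survives. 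Your final remark about descending from $K$ to $\mathbb{Q}$ is correct but can be avoided entirely: the denominators $[c+1]_q$ are nonzero rationals, so the seminormal basis is already defined over $\mathbb{Q}$.
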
   
    
\savebox\youngA{\begin{ytableau}
    1 & 2 & 3 \\ 4 \\ 5
    \end{ytableau}}%
\savebox\youngB{\begin{ytableau}
    1 & 2 & 4 \\ 3 \\ 5
    \end{ytableau}}%
    \savebox\youngC{\begin{ytableau}
   1 & 3 & 4 \\ 2 \\ 5
    \end{ytableau}}%
\savebox\youngD{\begin{ytableau}
    1 & 2 & 5 \\ 3 \\ 4
    \end{ytableau}}%
    \savebox\youngE{\begin{ytableau}
    1 & 3 & 5 \\ 2 \\ 4
    \end{ytableau}}%
\savebox\youngF{\begin{ytableau}
    1 & 4 & 5 \\ 2 \\ 3
    \end{ytableau}}%

    \begin{figure}[ht]
\adjustbox{scale=0.8,center}{
$$\begin{tikzcd}[row sep=tiny, every label/.append style = {font = \normalsize}]
& &   \usebox\youngC \arrow[dash, dr, "s_4"] & & \\
 \usebox\youngA \arrow[dash, r, "s_3"] &  \usebox\youngB \arrow[dash, ur, "s_2"] \arrow[dash, dr, "s_4"] &  &  \usebox\youngE \arrow[dash, r, "s_3"] &  \usebox\youngF\\
& &  \usebox\youngD \arrow[dash, ur, "s_2"] & &
\end{tikzcd}$$
} 
    \caption{The $6$ elements of $T_{(3,1,1)}$. The edges indicate the simple transpositions that transform one standard Young tableau to another one.}
    \label{Figure2}
\end{figure}
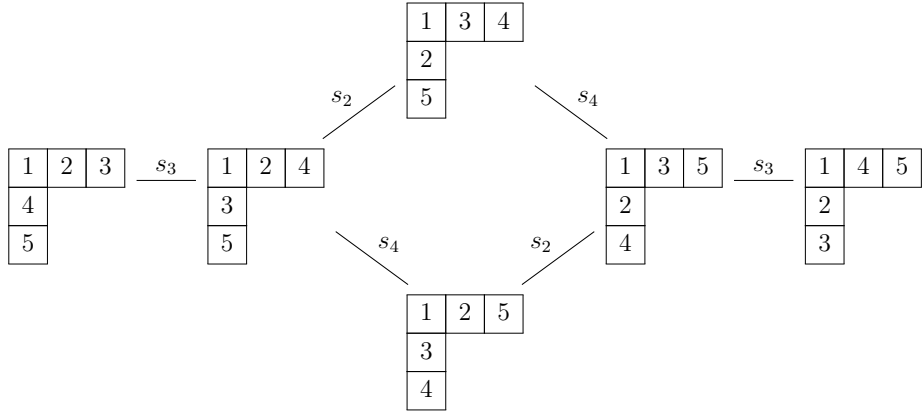

\begin{example}
\label{311Example}
Let $\lambda=(3,1,1)$ and let \[ t:= \adjustbox{scale=0.8}{  \usebox\youngB}\] be a standard Young tableau of $\lambda$. We wish to calculate $a_t$. It holds that $s_3 \cdot t_{\lambda}=t$, where the entry $3$ lies in position $(1,3)$ and the entry $4$ lies in position $(2,1)$ in $t_{\lambda}$, see Figure \ref{Figure2}. So $c=(3-1)-(1-2)-1=2$. By definition we now get \[a_t=x[4]_x[2]_x=x(x+1)^2(x^2+1).\]
Further easy calculations give us the result $$\det(\widetilde{\chi}_{\lambda}^{(q)})=(q^4+q^3+q^2+q+1) \cdot (\mathbb{Q}^{\times})^2.$$
\end{example}

\section{Orthogonal Determinants of  \texorpdfstring{$\mathrm{GL}_n(q)$}{GLn(q)}}
\label{OrthDetGLn}
Let $p$ be an odd prime and $q$ be a power of $p$. Let $n$ be a positive integer. In this chapter, we apply the statements of the previous chapters and achieve a complete description on the orthogonal determinants of the characters of $G:=\mathrm{GL}_n(q)$. 

The character tables of the general linear groups are fully known by the results of Green in \cite{GreenGeneralLinear}. The irreducible characters can be labeled by certain partition-valued functions, which allow to easily calculate the corresponding character fields and degrees, see for instance \cite[Proposition 2.8]{TurullSchurIndices} and \cite[Section 3.3.5]{HissFiniteGroupsLieType}. The Schur index of any irreducible character of $G$ is equal to $1$, see \cite[Proposition 12.6]{ZelevinskyHopfAlgebraGLn}. In short, we arrive that it is easy to determine whether any given irreducible character is an element of $\mathrm{Irr}^+(G)$, given just its combinatorial description. The general linear groups are examples of the finite groups of Lie type, see for instance \cite{digne_michel_2020} for more information.

We introduce some important structural subgroups of $G$:
\begin{enumerate}[label=(\roman*)]
    \item Let again $B \subseteq G$ be the Borel subgroup of upper triangular matrices. Note that $B$ is solvable.
    \item Let $T \subseteq B$ be the subgroup of diagonal matrices. Note that $T$ is abelian.
    \item Let $U \subseteq B$ be the normal subgroup of upper unitriangular matrices, i.e., the upper triangular matrices with only $1$s on the diagonal. Note that $U$ is a $p$-Sylow subgroup.
\end{enumerate}
The Borel subgroup can be decomposed as a semidirect product $B=T \ltimes U$. The Weyl group of $G$ is equal to $\mathfrak{S}_n$, with an action of $\mathfrak{S}_n$ on $T$ given by the permutation of diagonal entries. Moreover, this induces an action of $\mathfrak{S}_n$ on $\mathrm{Irr}(T)$.

The following notation was introduced in \cite[Remark 4.1]{HoyerNebe}. Let $\chi \in \mathrm{Irr}(G)$.
\begin{definition}
\label{DefinitionChiTChiU}
 Let $\chi \in \mathrm{Irr}(G)$. Let $\chi_T$ be the largest constituent of $\mathrm{Res}^G_B(\chi)$ such that its restriction to $U$ is trivial, i.e., \[ \mathrm{Res}^B_U(\chi_T)= \langle \mathrm{Res}^G_U(\chi),\mathbf{1}_U \rangle \cdot \mathbf{1}_U.  \]  We decompose $\mathrm{Res}^G_B(\chi)=\chi_T+\chi_U$, where we may regard $\chi_T$ (resp. $\chi_U$) as a character of $T$ (resp. $U$).  
\end{definition}
The character $\chi_T$ is also known as the \textit{Harish-Chandra restriction} of $\chi$. Note that Definition \ref{DefinitionChiTChiU} is a character-theoretic version of Definition \ref{StableModule}.
\begin{proposition}(see \cite[Theorem 5.3.7]{digne_michel_2020}
\label{ChiTProposition}
Assume that $\chi_T$ is non-zero; we also say that $\chi$ lies in the \textit{principal series}. Let $\theta \in \mathrm{Irr}(T)$ be a constituent of $\chi_T$. Then \[ \chi_T=a  \sum_{w \in \mathfrak{S}_n} w\cdot \theta \] for some $a \in \mathbb{Q}$. 
\end{proposition}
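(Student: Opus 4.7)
The plan is to exploit Harish--Chandra adjointness. Write $R^G_T(\theta) := \mathrm{Ind}^G_B(\widetilde{\theta})$ for the principal series representation attached to $\theta \in \mathrm{Irr}(T)$, where $\widetilde{\theta}$ is the inflation to $B$ along the quotient $B \to B/U = T$. Because $\widetilde{\theta}$ is trivial on $U$, Frobenius reciprocity yields
\[ \langle \chi_T, \theta \rangle_T = \langle \widetilde{\theta}, \mathrm{Res}^G_B(\chi) \rangle_B = \langle \chi, R^G_T(\theta) \rangle_G, \]
so the multiplicities of the constituents of $\chi_T$ are governed by which principal series representations contain $\chi$.

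The key input is a pair of classical facts (both established via Mackey's formula combined with the Bruhat decomposition $G = \bigsqcup_{w \in \mathfrak{S}_n} BwB$): (a) $R^G_T(\theta) \cong R^G_T(w \cdot \theta)$ for every $w \in \mathfrak{S}_n$, and (b) for $\theta, \theta' \in \mathrm{Irr}(T)$ in distinct $\mathfrak{S}_n$-orbits, $R^G_T(\theta)$ and $R^G_T(\theta')$ share no irreducible constituent. I would establish (a) by constructing an explicit intertwiner via the standard averaging over $U \cap \dot{w} U \dot{w}^{-1}$ for a lift $\dot{w} \in N_G(T)$ of $w$, and (b) by computing $\langle R^G_T(\theta), R^G_T(\theta') \rangle_G$ with Frobenius reciprocity and Mackey, where the Bruhat cells contribute nonzero pairings precisely when $\theta$ and $\theta'$ are $\mathfrak{S}_n$-conjugate.

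Granted (a) and (b), suppose $\theta$ is a constituent of $\chi_T$, so that $\chi$ is a constituent of $R^G_T(\theta)$. Applying (a) together with Frobenius reciprocity gives
\[ \langle \chi_T, w \cdot \theta \rangle_T = \langle \chi, R^G_T(w \cdot \theta) \rangle_G = \langle \chi, R^G_T(\theta) \rangle_G = \langle \chi_T, \theta \rangle_T \]
for every $w \in \mathfrak{S}_n$, while (b) forces $\langle \chi_T, \theta'' \rangle_T = 0$ whenever $\theta''$ lies outside the $\mathfrak{S}_n$-orbit of $\theta$. Hence $\chi_T$ is a constant multiple of the orbit sum: setting $a := \langle \chi_T, \theta \rangle_T / |\mathrm{Stab}_{\mathfrak{S}_n}(\theta)| \in \mathbb{Q}_{\geq 0}$ produces the identity $\chi_T = a \sum_{w \in \mathfrak{S}_n} w \cdot \theta$. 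The main obstacle is delivering (a) cleanly; once the intertwiner is in hand, the rest is a routine application of adjunctions, matching the proof given in Theorem 5.3.7 of Digne--Michel.
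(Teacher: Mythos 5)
Your argument is correct and is essentially the standard proof of this fact: the paper itself offers no proof but cites Digne--Michel, Theorem 5.3.7, whose argument is precisely this combination of Harish--Chandra adjunction with the Mackey-type computation $\langle R^G_T(\theta), R^G_T(\theta')\rangle_G = |\{w \in \mathfrak{S}_n : w\cdot\theta = \theta'\}|$, which simultaneously yields your facts (a) and (b). One small economy: you need not construct an explicit intertwiner for (a), since the same Mackey computation gives $\langle R^G_T(\theta), R^G_T(w\cdot\theta)\rangle_G = \|R^G_T(\theta)\|^2 = \|R^G_T(w\cdot\theta)\|^2$, and as the two characters also have equal degree, the equality case of Cauchy--Schwarz already forces $R^G_T(\theta) = R^G_T(w\cdot\theta)$, avoiding the invertibility questions that the averaging operator would raise.
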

Assume from now on that $\chi \in \mathrm{Irr}^+(G)$. 
\begin{proposition}
\label{ChiUDet}
 The character $\chi_U$ is orthogonally stable with $\mathbb{Q}(\chi_U)=\mathbb{Q}$. Moreover, $q-1 \mid \chi_U(1)$ and so \[ \det(\chi_U)=q^{\chi_U(1)/(q-1)} \cdot (\mathbb{Q}^{\times})^2.\]   
\end{proposition}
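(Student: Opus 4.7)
The strategy is to decompose an orthogonal module affording $\chi$ into its $U$-fixed and $U$-moving parts, identify $\chi_U$ with the character on the second summand, verify the hypotheses of Remark \ref{Abelian Group p-Group Determinant}(ii) for $\chi_U$, and then apply that formula.

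Let $(V,\beta)$ be an orthogonal $KG$-module affording $\chi$ for some $K \subseteq \mathbb{R}$ containing $\mathbb{Q}(\chi)$. The averaging idempotent $e_U := \frac{1}{|U|}\sum_{u \in U} u$ is self-adjoint for $\beta$, since $u^\dagger = u^{-1} \in U$ for every $u \in U$; hence $V = V^U \oplus (1-e_U)V$ is an orthogonal decomposition of $B$-modules, and the summand $W := (1-e_U)V$ equipped with $\beta|_W$ is a non-degenerate orthogonal $U$-module affording $\chi_U$. In particular $\chi_U$ is an orthogonal character of $U$. Rationality $\mathbb{Q}(\chi_U) = \mathbb{Q}$ follows from the classical integrality of the character values of $\mathrm{GL}_n(q)$ on unipotent elements (Green, \cite{GreenGeneralLinear}): every element of $U$ is unipotent, so $\chi|_U$ takes values in $\mathbb{Z}$, and therefore so does $\chi_U = \chi|_U - \chi_T(1)\mathbf{1}_U$. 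Orthogonal stability then rests on the fact that for $p$ odd every non-trivial $\theta \in \mathrm{Irr}(U)$ has Frobenius--Schur indicator $0$: inductively along the upper central series, $\theta$ has a non-trivial central character on some quotient $Z_k(U)/Z_{k-1}(U)$, whose values are primitive $p^{\ell}$-th roots of unity and thus non-real. Since $\chi$, and hence $\chi_U$, is real-valued, each non-trivial constituent appears in $\chi_U$ with the same multiplicity as its complex conjugate, and Remark \ref{DetOrthogonallySimple}(i) applied to each pair $m(\theta+\overline{\theta})$ yields orthogonal stability.

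The main obstacle is the divisibility $(q-1) \mid \chi_U(1)$. Decomposing $\chi_U = \sum_{O} m_O \pi_O$ where $O$ runs over the non-trivial $T$-orbits on $\mathrm{Irr}(U)$ and $\pi_O = \sum_{\theta \in O}\theta$, each summand has degree $|O|\theta(1)$ with $\theta(1)$ a power of $p$ (hence coprime to $q-1$), so it is enough to prove $(q-1) \mid |O|$ for every non-trivial orbit. I would proceed by induction on $|U|$ along the upper central series: if $\theta$ is non-trivial on $Z(U) = \langle I+aE_{1,n} : a \in \mathbb{F}_q\rangle$, its central character is a non-trivial character of the $\mathbb{F}_q$-vector space $Z(U)$, and $T$ acts on $Z(U)$ through the root character $a_1a_n^{-1} : T \twoheadrightarrow \mathbb{F}_q^{\times}$, so the central character lies in a $T$-orbit of size exactly $q-1$, forcing $(q-1) \mid [T:T_\theta] = |O|$; otherwise $\theta$ factors through $U/Z(U)$ and the inductive hypothesis applies, using at each descent that the relevant section of the upper central series is acted on by $T$ via non-trivial root characters of surjective image in $\mathbb{F}_q^{\times}$. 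Once all three ingredients are in place, Remark \ref{Abelian Group p-Group Determinant}(ii) applied to the $p$-group $U$ immediately yields $\det(\chi_U) = q^{\chi_U(1)/(q-1)} \cdot (\mathbb{Q}^{\times})^2$.
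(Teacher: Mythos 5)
Your proof is correct, and on the two substantive points it takes a genuinely different (and more structural) route than the paper. For orthogonal stability the two arguments coincide: $U$ has odd order, so only $\mathbf{1}_U$ is real and the non-trivial constituents of the integer-valued character $\chi_U$ pair off into $\theta+\overline{\theta}$. For $\mathbb{Q}(\chi_U)=\mathbb{Q}$ the paper cites a theorem from the literature, whereas you derive it from the integrality of character values of $\mathrm{GL}_n(q)$ on unipotent elements; this is equally valid and essentially self-contained (one can even bypass Green: a unipotent $u$ is conjugate to $u^m$ for all $m$ prime to $p$ since they share a Jordan form, so $\chi(u)\in\mathbb{Z}$). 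The real divergence is the divisibility $(q-1)\mid\chi_U(1)$: the paper reads it off from the explicit polynomial degree formulas for $\chi$ and $\chi_T$, while you prove it structurally by Clifford theory for $U\trianglelefteq B$, reducing to the claim that every non-trivial $T$-orbit on $\mathrm{Irr}(U)$ has length divisible by $q-1$. Your orbit argument does go through: for non-trivial $\theta$, taking $k$ minimal with $Z_k(U)\not\subseteq\ker\theta$, the central character of $\theta$ on $Z_k(U)/Z_{k-1}(U)\cong\bigoplus_{j-i=n-k}\mathbb{F}_q$ is non-trivial on some root space $U_{ij}$, so $T_\theta\subseteq T_{\omega_\theta}\subseteq\{a_i=a_j\}$, a subgroup of index exactly $q-1$ because the root character $a_ia_j^{-1}$ is surjective onto $\mathbb{F}_q^{\times}$ for the full torus of $\mathrm{GL}_n$. (The induction framing is slightly awkward, since $U/Z(U)$ is not the unipotent radical of a smaller $\mathrm{GL}_m$, but the minimal-$k$ formulation above makes the descent unnecessary.) Your approach buys independence from Green's degree formulas and makes visible exactly where the hypothesis $G=\mathrm{GL}_n$ enters — for $\mathrm{SL}_2(q)$ the root character is not surjective, the orbits have length $(q-1)/2$, and the statement fails, consistent with the half-discrete-series characters. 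The paper's approach is shorter given that the degree formulas are standard. Both are acceptable proofs.
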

\begin{proof}
The first statement follows since $\chi_U$ does not contain the trivial character by definition and by $U$ being a $p$-group for an odd prime $p$. Moreover, in \cite[Theorem 1.9]{TIEPPaper} it is shown that $\mathbb{Q}(\chi_U)=\mathbb{Q}$. The fact that $q-1 \mid \chi_U(1)$ can be deduced from the explicit formulas for the character degrees of $\chi$ and $\chi_T$, so we can now apply Remark \ref{Abelian Group p-Group Determinant}.   
\end{proof}

\begin{corollary}
Assume $\chi$ does not lie in the principal series. Then \[ \det(\chi)= q^{\chi_U(1)/(q-1)} \cdot (\mathbb{Q}(\chi)^{\times})^2. \]    
\end{corollary}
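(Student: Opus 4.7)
The plan is essentially to assemble two previously established results: the decomposition $\mathrm{Res}^G_B(\chi)=\chi_T+\chi_U$ collapses when $\chi$ is not in the principal series, and the $p$-group determinant formula handles what remains.

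First I would observe that the hypothesis ``$\chi$ does not lie in the principal series'' means precisely that $\chi_T=0$, by the definition given just before Proposition \ref{ChiTProposition}. Consequently, the decomposition in Definition \ref{DefinitionChiTChiU} simplifies to
\[ \mathrm{Res}^G_B(\chi)=\chi_U, \]
so the restriction of $\chi$ from $G$ to the Borel subgroup $B$ is nothing but $\chi_U$ viewed as a character of $B$ (inflated from the quotient, or equivalently as a genuine character of $B$ whose restriction to $U$ gives back $\chi_U$).

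Next I would invoke Proposition \ref{ChiUDet}: the character $\chi_U$ is orthogonally stable over $B$, has character field $\mathbb{Q}$, and satisfies
\[ \det(\chi_U)=q^{\chi_U(1)/(q-1)}\cdot(\mathbb{Q}^{\times})^2. \]
Because $\mathrm{Res}^G_B(\chi)=\chi_U$ is orthogonally stable, the hypothesis of Lemma \ref{OrthDetRestriction} is satisfied, and that lemma yields
\[ \det(\chi)=\det(\mathrm{Res}^G_B(\chi))\cdot(\mathbb{Q}(\chi)^{\times})^2=\det(\chi_U)\cdot(\mathbb{Q}(\chi)^{\times})^2. \]
Combining the two displayed equations and noting that the square class $q^{\chi_U(1)/(q-1)}\cdot(\mathbb{Q}^{\times})^2$ embeds into $\mathbb{Q}(\chi)^{\times}/(\mathbb{Q}(\chi)^{\times})^2$ as $q^{\chi_U(1)/(q-1)}\cdot(\mathbb{Q}(\chi)^{\times})^2$ gives the claimed formula.

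There is really no obstacle here beyond recognizing that the decomposition collapses; the proof is a direct chaining of Proposition \ref{ChiUDet} and Lemma \ref{OrthDetRestriction}. The only mild subtlety worth mentioning explicitly would be the identification of $\chi_U$ simultaneously as a character of $B$ (needed for Lemma \ref{OrthDetRestriction}) and as a character of the $p$-group $U$ (needed to apply the formula from Remark \ref{Abelian Group p-Group Determinant}), but this identification is already implicit in Definition \ref{DefinitionChiTChiU} and Proposition \ref{ChiUDet}, so no further work is required.
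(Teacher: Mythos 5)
Your proof is correct and uses the same two ingredients as the paper, namely Proposition \ref{ChiUDet} combined with Lemma \ref{OrthDetRestriction}. The only cosmetic difference is that the paper restricts directly to $U$ via $\mathrm{Res}^G_U(\chi)=\chi_U$ rather than to $B$, which sidesteps the $B$-versus-$U$ identification you flag at the end.
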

\begin{proof}
This follows since $\mathrm{Res}^G_U(\chi)=\chi_U$ and by Lemma \ref{OrthDetRestriction}.    
\end{proof}
From now on, assume that $\chi$ belongs to the principal series. Fix a constituent $\theta \in \mathrm{Irr}(T)$ of $\chi_T$. Since $\mathrm{Irr}(T) = \mathrm{Hom}(T, \mathbb{C}^\times)$ forms a group and by Proposition \ref{ChiTProposition}, the order of $\theta$ is well-defined and does not depend on the choice of $\theta$.  We will now examine all possible cases. Fix a generator $\varepsilon$ of $\mathbb{F}_q^\times$.

\subsection*{Case 1: $|\theta| \geq 3$}
In that case the Frobenius--Schur indicator of $\theta$ is equal to $0$. By Remark \ref{Abelian Group p-Group Determinant} $\chi_T$ is orthogonally stable. It follows that $\mathrm{Res}^G_B(\chi)$ is orthogonally stable, we also say that $\chi$ is \textit{Borel-stable}. Since $T$ is an abelian group, $\det(\chi_T)$ can be easily calculated. We have that \[ \det(\chi)=\det(\chi_T) q^{\chi_U(1)/(q-1)} \cdot (\mathbb{Q}(\chi)^{\times})^2 \] by Lemma \ref{OrthDetRestriction}. We refer to \cite{HoyerNebe} for a detailed discussion of the orthogonal determinants of Borel-stable characters for the groups $\mathrm{SL}_3(q)$ and $\mathrm{SU}_3(q)$.

\begin{example}
  Let $G=\mathrm{GL}_3(q)$. Let $\mu$ be a primitive complex $(q-1)$-th root of unity. We have that \[ T = \{ \mathrm{diag}(\varepsilon^{k}, \varepsilon^{\ell}, \varepsilon^{m}) \mid 0 \leq k,\ell,m \leq q-2 \}. \] Define the character $\theta \in \mathrm{Irr}(T)$ by \[  \theta \left( \mathrm{diag}(\varepsilon^{k}, \varepsilon^{\ell}, \varepsilon^{m}) \right):= \mu^{k-\ell}. \] Let $\chi:=\mathrm{Ind}^G_B(\theta)$; it holds that $\chi \in \mathrm{Irr}^+(G)$ with $\chi(1)=(q+1)(q^2+q+1)$. Moreover, $\mathbb{Q}(\chi)=\mathbb{Q}(\mu+\mu^{-1}).$

  It holds that \[ \chi_T=\sum_{w \in \mathfrak{S}_3} w \cdot \theta\] and so $\chi_T(1)=6.$ It is easy to see that \[ \det(\chi_T)=\det(\theta+\overline{\theta})=(\mu^2+\mu^{-2}) \cdot  (\mathbb{Q}(\mu+\mu^{-1})^{\times})^2, \] where we used Remark \ref{DetOrthogonallySimple} for the last equality. Since both $\chi(1)$ and $\chi_T(1)$ are known, we can deduce $\chi_U(1)$ and so $\det(\chi_U)=q \cdot (\mathbb{Q}^{\times})^2$ by Proposition \ref{ChiUDet}. Putting everything together, we conclude that \[ \det(\chi)=q  (\mu^2+\mu^{-2}) \cdot  (\mathbb{Q}(\mu+\mu^{-1})^{\times})^2.  \]
\end{example}
    \subsection*{Case 2: $|\theta|=1$} This is equivalent to $\theta=\mathbf{1}_T$. So by Definition \ref{DefinitionUnipotent}, $\chi=\chi_{\lambda}^{(q)}$ is a unipotent character for some partition $\lambda \vdash n$. Combining Theorem \ref{Condensation_OrthDet} and Theorem \ref{OrthogonalDeterminantsHeckeTypeAn}, we achieve the following:

\begin{theorem}
\label{Determinant Unipotent}
It holds that \[ \det( \chi)=f_{\lambda}(q) q^{\chi_U(1)/(q-1)} \cdot (\mathbb{Q}^{\times})^2.\]    
\end{theorem}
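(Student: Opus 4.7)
The strategy, signalled by the paragraph preceding the statement, is to apply the condensation result Theorem~\ref{Condensation_OrthDet} with $H=B$ the Borel subgroup, and then substitute in the two pre\-computed ingredients: Theorem~\ref{OrthogonalDeterminantsHeckeTypeAn} for the Hecke-algebra factor and Proposition~\ref{ChiUDet} for the unipotent-radical factor.

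The first step is to verify the hypotheses of Theorem~\ref{Condensation_OrthDet}. Since $\chi=\chi_\lambda^{(q)}$ is unipotent by Definition~\ref{DefinitionUnipotent}, we automatically have $\langle\mathrm{Res}^G_B(\chi),\mathbf{1}_B\rangle\neq 0$, and the associated Hecke-algebra character $\chi_{\mathcal{H}_B}$ is by construction $\widetilde{\chi}_\lambda^{(q)}$; in particular $\deg(\chi_{\mathcal{H}_B}) = \deg(\chi_\lambda^{(1)})$. To see that this degree is even, I would use the decomposition $\chi(1) = \chi_T(1) + \chi_U(1)$: since $\chi(1)\in 2\mathbb{Z}$ by $\chi\in\mathrm{Irr}^+(G)$ and since $(q-1)\mid\chi_U(1)$ by Proposition~\ref{ChiUDet} with $q-1$ even (because $p$ is odd), $\chi_T(1)=\deg(\widetilde{\chi}_\lambda^{(q)})$ must be even as well.

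Next I would identify the ``remainder'' character $\psi$ appearing in Theorem~\ref{Condensation_OrthDet}. In the present case $\theta=\mathbf{1}_T$, so every constituent of $\chi_T$ is inflated from the trivial character of $T$; Frobenius reciprocity then forces $\chi_T = \deg(\widetilde{\chi}_\lambda^{(q)})\cdot\mathbf{1}_B$, whence
\[
\psi \;=\; \mathrm{Res}^G_B(\chi) - \deg(\widetilde{\chi}_\lambda^{(q)})\,\mathbf{1}_B \;=\; \chi_U,
\]
which is orthogonally stable by Proposition~\ref{ChiUDet}. Theorem~\ref{Condensation_OrthDet} then produces
\[
\det(\chi) \;=\; \det(\widetilde{\chi}_\lambda^{(q)})\cdot\det(\chi_U)\cdot(\mathbb{Q}(\chi)^\times)^2.
\]
Because unipotent characters of $\mathrm{GL}_n(q)$ are afforded by rational representations we have $\mathbb{Q}(\chi)=\mathbb{Q}$, and inserting $\det(\widetilde{\chi}_\lambda^{(q)}) = f_\lambda(q)\cdot(\mathbb{Q}^\times)^2$ from Theorem~\ref{OrthogonalDeterminantsHeckeTypeAn} and $\det(\chi_U)=q^{\chi_U(1)/(q-1)}\cdot(\mathbb{Q}^\times)^2$ from Proposition~\ref{ChiUDet} yields the claimed equality.

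The main obstacle is really only careful bookkeeping: all of the genuinely hard work has been absorbed into the condensation theorem, the explicit Hecke-algebra formula and the $p$-group computation. The only mild subtleties are the parity argument that $\deg(\widetilde{\chi}_\lambda^{(q)})$ is even (which depends crucially on $q$ being odd) and the observation that, thanks to $\theta=\mathbf{1}_T$, the character $\psi$ simplifies exactly to $\chi_U$ with no leftover principal-series contribution.
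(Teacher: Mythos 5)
Your proposal is correct and follows exactly the route the paper takes: the paper derives this theorem by combining Theorem~\ref{Condensation_OrthDet} (with $\psi=\chi_U$) with Theorem~\ref{OrthogonalDeterminantsHeckeTypeAn} and Proposition~\ref{ChiUDet}, which is precisely your argument. You have in fact supplied more detail than the paper does, notably the parity check on $\deg(\widetilde{\chi}_\lambda^{(q)})$ via $\chi(1)=\chi_T(1)+\chi_U(1)$ and the identification $\psi=\chi_U$, both of which the paper leaves implicit.
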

\begin{example}
Let $\lambda=(3,1,1)$. It holds that $\chi(1)=q^3(q^2+q+1)(q^2+1)$. Since $\chi_T(1)=6$, we can calculate $\chi_U(1)$ and get $\det(\chi_U)=1 \cdot (\mathbb{Q}^{\times})^2$. We have already calculated $f_\lambda(q)\cdot (\mathbb{Q}^{\times})^2$ in Example \ref{311Example}, so we conclude \[  \det(\chi)=(q^4+q^3+q^2+q+1) \cdot (\mathbb{Q}^{\times})^2 .\]
\end{example}

\subsection*{Case 3: $|\theta|=2$}
We describe an explicit construction of the character $\chi$, which follows from the combinatorial description of the characters of $G$. Define the character $\mathbf{sgn} \in \mathrm{Irr}(\mathbb{F}_q^{\times})$ by setting $\mathbf{sgn}(\varepsilon)=-1$. Let $k$ be a positive integer. The character $\mathbf{sgn}$ extends to a linear character $\mathbf{sgn}_k \in \mathrm{Irr}(\mathrm{GL}_k(q))$ by setting $\mathbf{sgn}_k:=\mathbf{sgn} \circ \det.$

\begin{definition}
Let $(\lambda, \mu)$ be a pair of partitions with $\lambda \vdash \ell$, $\mu \vdash m$ and $\ell+m=n$ for some non-negative integers $\ell,m$. Let $P \subseteq G$ be a parabolic subgroup with Levi subgroup $\mathrm{GL}_{\ell}(q) \times \mathrm{GL}_m(q)$. Define \[ \chi_{(\lambda, \mu)}:=\mathrm{Ind}^G_P \left( \chi_{\lambda}^{(q)} \boxtimes (\mathbf{sgn}_m \cdot \chi_{\mu}^{(q)})   \right) \in \mathrm{Irr}(G). \]
The characters $\chi_{(\lambda, \mu)}$, as we go over all such pairs of partitions, are precisely the irreducible characters of $G$ such that $(\chi_{(\lambda, \mu)})_T$ is a sum of linear characters of order $2$.
\end{definition}
So $\chi=\chi_{(\lambda,\mu)}$ for two such partitions $\lambda$ and $\mu$. Note that if $\chi_{\mu}^{(q)} \in \mathrm{Irr}^+(\mathrm{GL}_m(q))$, it clearly holds that \[ \det (\mathbf{sgn}_m \cdot \chi_{\mu}^{(q)}) =\det(\chi_{\mu}^{(q)}).\] Finally, $\det(\chi)$ can be deduced by Lemma \ref{Determinant Induced Character}, Lemma \ref{Determinants Direct Product Groups} and Theorem \ref{Determinant Unipotent}. 

\begin{remark}
The question arises whether the results for the general linear group $\mathrm{GL}_n(q)$ can be extended to the special linear groups $\mathrm{SL}_n(q)$ for $q$ odd. Indeed, almost everything carries over with almost no changes. For $q \equiv 3 \mod 4$ in particular, all orthogonal determinants can be deduced with the methods described in this section. 

Assume that $q \equiv 1 \mod 4$ and that $n=2k$ is even. For $n \geq 6$, there are characters in $\mathrm{Irr}^+(\mathrm{SL}_n(q))$ for which we need some new methods:

Let $\lambda$ be a partition of $k$. Then \[  \mathrm{Res}^G_{\mathrm{SL}_n(q)}(\chi_{(\lambda,\lambda)})=\chi_{(\lambda,+)}+ \chi_{(\lambda,-)} \] for some orthogonal characters $\chi_{(\lambda,+)}, \chi_{(\lambda,-)} \in \mathrm{Irr}(\mathrm{SL}_n(q))$, see \cite{TurullSchurIndices}. If $\chi_{(\lambda,+)} \in \mathrm{Irr}^+(\mathrm{SL}_n(q))$, Theorem \ref{Condensation_OrthDet} allows again to reduce the calculation to a monomial algebra $\mathcal{H}$. This algebra $\mathcal{H}$ is a deformation of a Coxeter group of type $D_n$, though its Schur basis and involution differs from the one of the corresponding Iwahori--Hecke algebra. Further investigation into the representation theory of $\mathcal{H}$ is required to compute its orthogonal determinants.
\end{remark}

\section{On a conjecture by Richard Parker}
We begin this section with some terminology.
\begin{definition}
Let $a$ be a positive rational number. We say that the square class $a \cdot (\mathbb{Q}^{\times})^2$ is \textit{odd} (resp. \textit{even}) if and only if the unique squarefree positive integer $a'$ such that \[a \cdot (\mathbb{Q}^{\times})^2 = a' \cdot (\mathbb{Q}^{\times})^2 \] is odd (resp. even).
\end{definition}

Having had calculated thousands of orthogonal determinants of finite (simple) groups, Richard Parker made the following conjecture.
\begin{conjecture}(see \cite[Conjecture 1.3]{NebeOrthDisc})
\label{ParkersConjecture}
Let $G$ be a finite group and let $\chi$ be an orthogonally stable character with $\mathbb{Q}(\chi)=\mathbb{Q}$. Then $\det(\chi)$ is odd.
\end{conjecture}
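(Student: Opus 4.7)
The final statement is Richard Parker's conjecture itself, and the present paper only confirms it for $G=\mathrm{GL}_n(q)$; there is no complete proof in sight. So this proposal outlines a strategy for attacking the general conjecture and indicates where the real difficulty lies, rather than a self-contained argument.

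The first move is to reduce to orthogonally simple characters with rational character field. By \cite[Proposition~5.17]{NebeParkerOrtStab} (quoted just before Remark~\ref{Abelian Group p-Group Determinant}), for an orthogonally stable $\chi$ we can decompose $\chi=\sum_i\chi_i$ into orthogonally simple pieces, each with character field contained in $\mathbb{Q}(\chi)=\mathbb{Q}$, and $\det(\chi)$ is determined by the $\det(\chi_i)$ through a multiplicative formula in $\mathbb{Q}^{\times}/(\mathbb{Q}^{\times})^2$. A product of odd classes being odd, the claim reduces to the three cases of Remark~\ref{DetOrthogonallySimple}. Case (ii) gives $\det(\chi)=1$, trivially odd. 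Case (i) yields $\det(\chi)=\delta^{\psi(1)}$ with $\mathbb{Q}[\sqrt{-\delta}]=\mathbb{Q}(\psi)$, and we may choose $\delta$ to be a squarefree positive integer representing the associated imaginary quadratic field; if $\delta$ is odd we are done, while the residual subcase $\delta\equiv 2\pmod{(\mathbb{Q}^{\times})^2}$ requires showing that $\psi(1)$ must then be even, a number-theoretic statement approachable via the $2$-adic Schur index of $\psi$ and the fact that $\chi=\psi+\overline{\psi}$ is afforded by a rational representation.

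The substantive case is (iii), $\chi\in\mathrm{Irr}^+(G)$ with $\mathbb{Q}(\chi)=\mathbb{Q}$. The natural line is to invoke the classification of finite simple groups: use Clifford theory to push the problem from $G$ down to quasi-simple sections (tracking the rationality hypothesis carefully through central extensions, wreath products, and outer automorphisms), and then verify oddness family by family. The present paper deals with type $A$ by the route $\mathrm{GL}_n(q)\leadsto \mathcal{H}_q\leadsto \mathfrak{S}_n$, invoking the symmetric-group case proved in \cite{HoyerSymmetricParker}. One would aim to generalise this condensation route to the other finite groups of Lie type: the Borel-restricted contribution in Proposition~\ref{ChiUDet} is a power of $q$, hence odd, so oddness for Lie-type groups should follow from oddness of the orthogonal determinants of the relevant Iwahori--Hecke algebras in types $B$, $C$, $D$, and exceptional, with formulas analogous to Theorem~\ref{OrthogonalDeterminantsHeckeTypeAn} specialised at odd $q$. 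The remaining simple groups (alternating, sporadic) would have to be handled separately, with $\mathfrak{A}_n$ reducing to $\mathfrak{S}_n$ and sporadic groups amenable to direct character-table inspection.

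The main obstacle is clearly this final, CFSG-driven step. A uniform proof based on general principles of character theory seems out of reach: the oddness of orthogonal determinants for rational-valued $\mathrm{Irr}^+$-characters appears to be a genuinely family-dependent phenomenon, and the Iwahori--Hecke reduction exploited here is tailored to split $(B,N)$-pairs with known Schur basis formulas. Extending it beyond type $A$ is already non-trivial, as the closing remark of Section~\ref{OrthDetGLn} indicates for $\mathrm{SL}_n(q)$, where a deformation of a type $D$ Coxeter group with a non-standard involution arises. Establishing oddness for each such deformed algebra, together with a careful Clifford-theoretic bookkeeping ensuring that passage to subgroups and quotients preserves both rationality of the character field and the parity of the resulting square class, is the genuinely hard core of the conjecture.
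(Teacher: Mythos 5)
You have correctly identified that the statement labelled \ref{ParkersConjecture} is an open conjecture: the paper offers no proof of it, and only establishes the special case $G=\mathrm{GL}_n(q)$ ($q$ odd) in its final theorem. Your outline of how that special case is handled --- the trichotomy Borel-stable/unipotent/order-two, the observation that the $U$-contribution $q^{\chi_U(1)/(q-1)}$ is odd, the condensation to the Iwahori--Hecke algebra, and the appeal to the symmetric-group result of \cite{HoyerSymmetricParker} --- matches the paper's actual route, and your remarks on where a general proof would have to go (other Lie types, CFSG, the type-$D$ deformation flagged at the end of Section \ref{OrthDetGLn}) are reasonable and clearly labelled as speculative. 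One substantive caveat on your reduction step: the orthogonally simple components $\chi_i$ of a rational orthogonally stable $\chi$ need \emph{not} have character field $\mathbb{Q}$; the formula of \cite[Proposition 5.17]{NebeParkerOrtStab} expresses $\det(\chi)$ via norms $N_{\mathbb{Q}(\chi_i)/\mathbb{Q}}$ of the $\det(\chi_i)$ together with discriminant contributions, so ``a product of odd classes is odd'' does not suffice --- one genuinely needs the generalized number-field version of the conjecture (with oddness defined via prime ideals above $2$, as in \cite{NebeOrthDisc}) even to settle the rational case by this decomposition. This is precisely why the paper cites the generalized conjecture rather than only the rational one.
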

The paper \cite{NebeOrthDisc} contains a generalized version of this conjecture for arbitrary number fields and a proof of the conjecture for $G$ a solvable group. Moreover, the conjecture holds for the symmetric groups, see \cite{HoyerSymmetricParker}.

Let $n$ be a positive integer and let $q$ be a power of an odd prime. In this final section, we confirm the abovementioned (generalized) conjecture for the group $G:=\mathrm{GL}_n(q)$. First we note that it is enough to confirm it just for the $\mathrm{Irr}^+(G)$ characters. We now fix a character $\chi \in \mathrm{Irr}^+(G)$. By Section \ref{OrthDetGLn}, there are the following mutually exclusive cases:
\begin{enumerate}[label=(\roman*)]
\item $\chi$ is Borel-stable. Since any Borel subgroup is a solvable group, Conjecture \ref{ParkersConjecture} holds for $\chi$.
\item $\chi$ is unipotent.
\item $\chi_T$ is a sum of linear characters of degree $2$. Then either $\det(\chi)=1 \cdot (\mathbb{Q}^{\times})^2$ or $\det(\chi)=\det(\chi_{\lambda}^{(q)})$ for $\lambda \vdash k$ a partition of some positive integer $k \leq n$.
\end{enumerate}
We conclude that Conjecture \ref{ParkersConjecture} holds for $G$ if and only if it holds for any orthogonally stable unipotent character.

 \begin{definition}
The cyclotomic polynomials $\Phi_n(x)$ for positive integers $n$ are inductively defined by the condition \[ x^n-1=\prod_{d \mid n}\Phi_d(x).\]
 \end{definition}
 In particular, \[ [n]_x=\frac{\prod_{d \mid n}\Phi_d(x)}{x-1}.\] The following gives some very basic properties of cyclotomic polynomials, see \cite[§46, §48]{NagellNumberTheory}.
\begin{lemma}
\label{CyclotomicLemma}
  Let $n$ be a positive integer.
    \begin{enumerate}[label=\roman*)]
        \item $\Phi_n(x) \in \mathbb Z[x]$.
        \item $\Phi_{2^n}(x)=x^{2^{n-1}}+1$.
        \item $\Phi_n(1)=\begin{cases}
            s, &\text{if} \ n=s^k \ \text{for some prime} \ s, \\
            0, &\text{if} \ n=1, \\
            1, &\text{else}. 
        \end{cases}$
    \end{enumerate}
\end{lemma}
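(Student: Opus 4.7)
The plan is to prove the three parts in order, each by induction on $n$, exploiting the defining relation $x^n - 1 = \prod_{d \mid n} \Phi_d(x)$ as a recursion that expresses $\Phi_n(x)$ in terms of the $\Phi_d(x)$ for $d \mid n$ with $d < n$.

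For part (i), strong induction on $n$ suffices. The base case is $\Phi_1(x) = x - 1 \in \mathbb{Z}[x]$. For $n > 1$, by the inductive hypothesis the polynomial $P_n(x) := \prod_{d \mid n,\, d < n} \Phi_d(x)$ lies in $\mathbb{Z}[x]$ and is monic; polynomial long division of the monic integer polynomial $x^n - 1$ by $P_n(x)$ then stays inside $\mathbb{Z}[x]$, so $\Phi_n(x) \in \mathbb{Z}[x]$. Part (ii) proceeds analogously: the base case $\Phi_2(x) = x + 1$ follows from $x^2 - 1 = (x - 1)\Phi_2(x)$, and for the inductive step the defining relation combined with the hypothesis gives $\prod_{k=0}^{n-1} \Phi_{2^k}(x) = x^{2^{n-1}} - 1$, whence $\Phi_{2^n}(x) = (x^{2^n} - 1)/(x^{2^{n-1}} - 1) = x^{2^{n-1}} + 1$.

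Part (iii) is the substantive one. The case $n = 1$ is immediate from $\Phi_1(1) = 0$. For $n > 1$, dividing the defining relation by $x - 1 = \Phi_1(x)$ and evaluating at $x = 1$ yields the key recurrence
\[ n = [n]_1 = \prod_{d \mid n,\, d > 1} \Phi_d(1). \]
I would then argue by strong induction on $n > 1$. When $n = s^k$ is a prime power with $k \geq 1$, the divisors $d > 1$ of $n$ are exactly $s, s^2, \ldots, s^k$, and all but the last contribute $s$ each by the inductive hypothesis, forcing $\Phi_{s^k}(1) = s^k / s^{k-1} = s$. When $n$ has at least two distinct prime factors, the divisors $d > 1$ split into prime-power divisors (which, by the prime-power case just settled, contribute $\prod_{p \mid n} p^{e_p} = n$ in total) and non-prime-power proper divisors $1 < d < n$ (which contribute $1$ each by the inductive hypothesis); this forces $\Phi_n(1) = 1$.

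The main obstacle is purely organisational: setting up the strong induction in (iii) so that the prime-power case feeds cleanly into the mixed-prime case. Once that is done, the final step is a bookkeeping calculation using the recurrence $n = \prod_{d \mid n,\, d > 1} \Phi_d(1)$, and no deeper number theory (such as the Von Mangoldt function or explicit Möbius inversion) is required.
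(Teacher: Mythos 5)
Your proof is correct. Note that the paper does not actually prove this lemma at all: it simply cites Nagell's textbook (\S 46, \S 48 of that reference), so there is no in-paper argument to compare against. Your self-contained treatment is the standard one and works as written: parts (i) and (ii) follow by induction from the defining product formula, and part (iii) from the recurrence $n=\prod_{d\mid n,\,d>1}\Phi_d(1)$ obtained by dividing out $\Phi_1(x)=x-1$ before evaluating at $x=1$ (evaluating the original relation directly would give $0=0$ and no information, so this step is genuinely needed and you handle it correctly). The case split in (iii) --- prime powers first, then integers with at least two prime factors, with the prime-power divisors contributing exactly $n$ and the remaining proper divisors contributing $1$ --- closes the induction cleanly. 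The only point worth a sentence in a full write-up is in part (i): with the paper's inductive definition, $\Phi_n(x)$ is a priori only the rational function $(x^n-1)/P_n(x)$, so one should observe that the division is exact, e.g.\ because $P_n(x)$ divides $x^n-1$ in $\mathbb{Q}[x]$ (its roots are roots of unity of order properly dividing $n$, and $x^n-1$ has distinct roots); once exactness is known, your monic long-division argument gives integrality.
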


\begin{lemma}
\label{TechnicalLemmaCyclotomic}
Let $c$ be a positive integer. Then the square classes $c(c+2) \cdot (\mathbb Q^{\times})^2$ and $[c]_q[c+2]_q \cdot (\mathbb Q^{\times})^2$ have the same parity.
\end{lemma}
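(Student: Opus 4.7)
The plan is to translate the square-class equality into a statement about $2$-adic valuations and then compute $v_2([c]_q)$ explicitly, where $v_2$ denotes the $2$-adic valuation. Since a positive rational $a$ gives an odd square class $a \cdot (\mathbb{Q}^{\times})^2$ precisely when $v_2(a)$ is even, the claim reduces to showing
\[ v_2\bigl([c]_q [c+2]_q\bigr) \equiv v_2\bigl(c(c+2)\bigr) \pmod 2. \]
Since $c$ and $c+2$ have the same parity, I would split the argument into two cases according to whether $c$ is odd or even.

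The odd case is immediate: if $c$ is odd, then $[c]_q = 1 + q + q^2 + \cdots + q^{c-1}$ is a sum of an odd number of odd integers, hence odd; the same applies to $[c+2]_q$, and $c(c+2)$ is itself odd, so both sides of the congruence vanish.

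For the even case I would invoke the Lifting the Exponent formula at the prime $2$: for odd $q$ and even positive integer $n$,
\[ v_2(q^n - 1) = v_2(q-1) + v_2(q+1) + v_2(n) - 1. \]
Subtracting $v_2(q-1)$ yields $v_2([c]_q) = v_2(q+1) + v_2(c) - 1$ whenever $c$ is even, and the analogous identity holds for $c+2$. Adding these two expressions gives $2 v_2(q+1) + v_2(c) + v_2(c+2) - 2$; since $2 v_2(q+1) - 2$ is even, this sum is congruent modulo $2$ to $v_2(c) + v_2(c+2) = v_2(c(c+2))$, as required.

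The main obstacle is essentially just bookkeeping: one has to be careful that the displayed LTE identity requires $n$ to be even, since for odd $n$ it fails and one must use instead $v_2(q^n - 1) = v_2(q-1)$. If one prefers to avoid citing LTE and stay closer to the cyclotomic language already introduced, the same conclusion can be reached by iterating the factorisation $q^{2k} - 1 = (q^k - 1)(q^k + 1)$ together with the observation that $v_2(q^{2k} + 1) = 1$ for odd $q$ and $k \geq 1$, but the LTE route is the most direct.
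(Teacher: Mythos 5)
Your proof is correct and follows essentially the same route as the paper: both reduce the claim to comparing $2$-adic valuations, dispose of the odd case trivially, and in the even case isolate the power-of-$2$ contribution to $[c]_q[c+2]_q$ — the paper via the factorization into cyclotomic polynomials $\Phi_{2^i}$ with $\Phi_{2^i}(q)\equiv 2 \pmod 4$, you via the Lifting-the-Exponent identity, which packages exactly that computation. The cancellation of $2v_2(q+1)-2$ modulo $2$ in your argument corresponds to the paper discarding the square factor $(x+1)^2$.
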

\begin{proof}
Since $q$ is odd, for any polynomial $f \in \mathbb{Z}[x]$ it holds that $$f(1) \equiv f(q) \mod 2.$$ Let $f:=[c]_x[c+2]_x$. If $c$ is odd, then clearly both $f(1)=c(c+2)$ and $f(q)=[c]_q[c+2]_q$ are odd integers, so the statement holds here.

Assume that $c$ is even. Then exactly one of $c/2$ and $(c+2)/2$ is odd. Let $2^k$ be the biggest power of $2$ such that $c(c+2)=2 \cdot 2^k \cdot m$ for $m$ an odd integer. Clearly $k \geq 2$. We write
\begin{equation}
\label{Equation1}
f(x)=[c]_x[c+2]_x=(x+1)^2 \left( \prod_{i=2}^k \Phi_{2^i}(x) \right) \cdot g(x),
\end{equation}

where $g \in \mathbb{Z}[x]$ is the product of all cyclotomic polynomials $\Phi_d(x)$ for the divisors $d$ of either $c$ or $c+2$ that are not powers of $2$.

We go through the three factors of $f$ on the right hand side of \ref{Equation1}. The term $(x+1)^2$ can be disregarded since we consider square classes. By Lemma \ref{CyclotomicLemma}(ii) we observe that for any $i \geq 2$ it holds that $\Phi_{2^i}(q)=q^{2^{i-1}}+1=2 \cdot r$ for some odd integer $r$. So, clearly \[ \prod_{i=2}^k \Phi_{2^i}(1) \cdot (\mathbb{Q}^{\times})^2=2^{k-1} \cdot (\mathbb{Q}^{\times})^2 \] and \[\prod_{i=2}^k \Phi_{2^i}(q) \cdot (\mathbb{Q}^{\times})^2 \] have the same parity. Finally, by Lemma \ref{CyclotomicLemma}(iii), it holds that $g(1)$ is an odd integer, and therefore $g(q)$ is also an odd integer. This concludes the proof.
\end{proof}

\begin{proposition}
Let $\lambda \vdash n$ such that the unipotent character $\chi_{\lambda}^{(q)} \in \mathrm{Irr}^+(G)$. Then $\det(\chi)$ is odd.
\end{proposition}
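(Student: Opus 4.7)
The proof plan is to reduce the claim, via Theorem \ref{Determinant Unipotent}, to a parity statement about the polynomial $f_\lambda$, and then use Lemma \ref{TechnicalLemmaCyclotomic} to pass from the value $f_\lambda(q)$ to the value $f_\lambda(1)$, at which point the already-known case of the symmetric groups from \cite{HoyerSymmetricParker} finishes the job.

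First, I would apply Theorem \ref{Determinant Unipotent} to write
\[ \det(\chi_\lambda^{(q)}) = f_\lambda(q)\, q^{\chi_U(1)/(q-1)} \cdot (\mathbb{Q}^{\times})^2. \]
Since $q$ is a power of an odd prime, the factor $q^{\chi_U(1)/(q-1)}$ is an odd positive integer. Hence the parity of $\det(\chi_\lambda^{(q)})$ agrees with the parity of the square class $f_\lambda(q)\cdot(\mathbb{Q}^{\times})^2$, and the task reduces to showing the latter is odd.

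Next, I would unwind the definition of $f_\lambda = \prod_{t \in T_\lambda} a_t$. Every $a_t$ is built from $a_{t_\lambda}=1$ by successive multiplications of the form $a_{t'} = x\,[c+2]_x[c]_x\,a_t$. Evaluated at $x=q$, each factor of $x$ contributes an odd number $q$ (irrelevant modulo squares), while each factor $[c+2]_x[c]_x$ contributes $[c+2]_q[c]_q$. Evaluated at $x=1$, the same recipe produces $(c+2)\cdot c$ in place of $[c+2]_q[c]_q$. Lemma \ref{TechnicalLemmaCyclotomic} asserts precisely that $[c]_q[c+2]_q\cdot(\mathbb{Q}^{\times})^2$ and $c(c+2)\cdot(\mathbb{Q}^{\times})^2$ share the same parity. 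Multiplying these parities over all steps in the construction of every $a_t$, and then over all $t \in T_\lambda$, I would conclude that $f_\lambda(q)\cdot(\mathbb{Q}^{\times})^2$ and $f_\lambda(1)\cdot(\mathbb{Q}^{\times})^2$ have the same parity.

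Finally, I would observe that the irreducible character $\chi_\lambda^{(1)}$ of the symmetric group $\mathfrak{S}_n$ lies in $\mathrm{Irr}^+(\mathfrak{S}_n)$: it is afforded by a rational representation, and its degree equals $\deg(\widetilde{\chi}_\lambda^{(q)})=\deg(\chi_\lambda^{(q)})$, which is even by hypothesis. Theorem \ref{OrthogonalDeterminantsHeckeTypeAn} then gives $\det(\chi_\lambda^{(1)}) = f_\lambda(1)\cdot(\mathbb{Q}^{\times})^2$, and $\mathbb{Q}(\chi_\lambda^{(1)})=\mathbb{Q}$. By the resolution of Parker's conjecture for symmetric groups in \cite{HoyerSymmetricParker}, $\det(\chi_\lambda^{(1)})$ is odd, so $f_\lambda(1)$ is odd, so $f_\lambda(q)$ is odd, so $\det(\chi_\lambda^{(q)})$ is odd.

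The only delicate point is verifying that the parity-preserving substitution $q \leadsto 1$ really does compose correctly across the inductive construction of all the $a_t$: one must check that the individual factors $x$ contribute odd values at both specializations, and that Lemma \ref{TechnicalLemmaCyclotomic} may be applied at every step (so in particular that the relevant integers $c$ appearing as axial distances are positive, which is exactly the hypothesis of the lemma). Once this bookkeeping is done, the argument is a direct chain of reductions with no further surprises.
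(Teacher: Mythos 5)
Your proposal is correct and follows essentially the same route as the paper's own proof: reduce via Theorem \ref{Determinant Unipotent} and Theorem \ref{OrthogonalDeterminantsHeckeTypeAn} to comparing the parities of $f_\lambda(q)$ and $f_\lambda(1)$, handle each factor $x[c+2]_x[c]_x$ with Lemma \ref{TechnicalLemmaCyclotomic}, and invoke the symmetric group case from \cite{HoyerSymmetricParker}. Your extra bookkeeping remarks (oddness of $q^{\chi_U(1)/(q-1)}$, positivity of the integers $c$) only make explicit what the paper leaves implicit.
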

\begin{proof}
The idea is to show that $\det( \chi_{\lambda}^{(q)})$ is odd if and only if $\det( \chi_{\lambda}^{(1)})$ is odd. Since Conjecture \ref{ParkersConjecture} holds for the symmetric groups by \cite{HoyerSymmetricParker}, the statement then follows.

Recall by Theorem \ref{Determinant Unipotent} and Theorem \ref{OrthogonalDeterminantsHeckeTypeAn} that \[ \det( \chi_{\lambda}^{(q)})=f_{\lambda}(q) q^{\chi_U(1)/(q-1)} \cdot (\mathbb{Q}^{\times})^2\]  and \[ \det( \chi_{\lambda}^{(1)})=f_{\lambda}(1) \cdot (\mathbb{Q}^{\times})^2.\]

So it suffices to show that $f_{\lambda}(q) \cdot (\mathbb{Q}^{\times})^2$ and $f_{\lambda}(1) \cdot (\mathbb{Q}^{\times})^2$ have the same parity. Recall that $f_{\lambda}=\prod_{t \in T_{\lambda}} a_t$. By the definition of the polynomials $a_t$, we can further reduce to showing that $q[c+2]_q[c]_q \cdot (\mathbb{Q}^{\times})^2$ and $(c+2)c \cdot (\mathbb{Q}^{\times})^2$ have the same parity, for any positive integer $c$. But this follows from Lemma \ref{TechnicalLemmaCyclotomic} and so we are done.
\end{proof}

To summarize, we have shown the following.
\begin{theorem}
Conjecture \ref{ParkersConjecture} holds for the groups $\mathrm{GL}_n(q)$ for $q$ a power of an odd prime and $n$ a positive integer.
\end{theorem}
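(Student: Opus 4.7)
The plan is to assemble the theorem from the case analysis already developed in Section \ref{OrthDetGLn} together with the proposition just established for unipotent characters. First I would note that Parker's conjecture reduces to verifying oddness of $\det(\chi)$ for $\chi \in \mathrm{Irr}^+(G)$ with $\mathbb{Q}(\chi) = \mathbb{Q}$: an arbitrary orthogonally stable rational-valued character decomposes into orthogonally simple constituents, and the contributions of types (i) and (ii) in Remark \ref{DetOrthogonallySimple} are determined entirely by the arithmetic of the extension $\mathbb{Q}(\psi)/\mathbb{Q}$, whose oddness behavior is covered by the generalized conjecture treated in \cite{NebeOrthDisc}. This is precisely the reduction step already announced in the text before the proposition.

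Next, fix $\chi \in \mathrm{Irr}^+(G)$ with $\mathbb{Q}(\chi) = \mathbb{Q}$ and apply the trichotomy of Section \ref{OrthDetGLn}. If $\chi$ is Borel-stable, Lemma \ref{OrthDetRestriction} gives $\det(\chi) = \det(\mathrm{Res}^G_B(\chi)) \cdot (\mathbb{Q}^\times)^2$, so the desired oddness descends from the Borel subgroup $B$, which is solvable and for which Conjecture \ref{ParkersConjecture} is already known by \cite{NebeOrthDisc}. If $\chi$ is unipotent, the proposition just proved supplies the conclusion directly.

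For the remaining case, in which $\chi_T$ is a sum of linear characters of order $2$, I would write $\chi = \chi_{(\lambda,\mu)} = \mathrm{Ind}^G_P\bigl(\chi_\lambda^{(q)} \boxtimes (\mathbf{sgn}_m \cdot \chi_\mu^{(q)})\bigr)$ for a parabolic $P$ with Levi $\mathrm{GL}_\ell(q) \times \mathrm{GL}_m(q)$, as in the definition preceding this section. Lemma \ref{Determinant Induced Character} either yields $\det(\chi) = 1 \cdot (\mathbb{Q}^\times)^2$ (if $[G:P]$ is even), which is trivially odd, or identifies $\det(\chi)$ with the determinant of the inducing character on the Levi. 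In the latter case Lemma \ref{Determinants Direct Product Groups} expresses this determinant as a power of $\det(\chi_\lambda^{(q)})$ or $\det(\chi_\mu^{(q)})$ (using that $\mathbf{sgn}_m$ is linear, so $\det(\mathbf{sgn}_m \cdot \chi_\mu^{(q)}) = \det(\chi_\mu^{(q)})$, as the paper records), and the unipotent proposition again forces oddness.

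The only genuine obstacle in the whole theorem has already been dispatched in the proposition preceding it, whose core is the parity comparison of Lemma \ref{TechnicalLemmaCyclotomic} between $c(c+2)$ and $[c]_q[c+2]_q$. What remains for the theorem itself is the straightforward bookkeeping outlined above, tying together the three cases of Section \ref{OrthDetGLn} via the restriction and induction lemmas and the solvable-group input from \cite{NebeOrthDisc}.
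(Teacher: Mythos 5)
Your proposal follows essentially the same route as the paper: reduce to $\mathrm{Irr}^+(G)$, split into the Borel-stable, unipotent, and order-$2$ cases of Section \ref{OrthDetGLn}, dispatch the first via solvability of $B$ and \cite{NebeOrthDisc}, the third via Lemmas \ref{Determinant Induced Character} and \ref{Determinants Direct Product Groups}, and rest the remaining weight on the unipotent proposition built on Lemma \ref{TechnicalLemmaCyclotomic}. This is exactly the paper's argument, if anything spelled out in slightly more detail on the reduction step.
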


\bibliography{References}
\end{document}